\def\id{\mathrm{id}}
\def\sl{\mathrm{sl}}
\def\Cur{\mathrm{Cur}}
\def\Span{\mathrm{Span}}
\def\Spec{\mathrm{Spec}}
\def\Aut{\mathrm{Aut}}
\newtheorem{lemma}{Lemma}
\newtheorem{theorem}{Theorem}
\newtheorem{proposition}{Proposition}
\newtheorem{corollary}{Corollary}
\newtheorem{example}{Example}
\begin{document}

\begin{center}
{\Large
Rota---Baxter operators on $\Cur(\sl_2(\mathbb{C}))$
}

\smallskip

Vsevolod Gubarev, Roman Kozlov
\end{center}

\begin{abstract}

We classify all Rota---Baxter operators on the simple conformal Lie algebra $\Cur(\sl_2(\mathbb{C}))$ and clarify which of them arise from the solutions to the conformal classical Yang---Baxter equation due to the connection discovered by Y. Hong and C. Bai in 2020.

{\it Keywords}:
conformal Lie algebra, Rota---Baxter operator, conformal classical Yang---Baxter equation.
\end{abstract}

\section{Introduction}

A linear operator~$R$ defined on an algebra~$A$ is called a Rota---Baxter operator, if the following relation holds for all $x,y\in A$,
$$
R(a)R(b) = R(R(a)b + aR(b) + kab).
$$
Here $k$ is a fixed scalar from the ground field, which is called a weight of~$R$. 
Nowadays, we observe the growing interest to such operators.
Defined by G. Baxter in 1960~\cite{Baxter} as an abstract generalization of the integral operator, Rota---Baxter operators showed their importance due to close connection with the Yang---Baxter equation~\cite{Aguiar,Belavin1982}, pre- and postalgebras~\cite{BBGN2011,Embedding} (among them pre-Lie algebras (also called left-symmetric algebras) are of the most interest), double Lie algebras~\cite{DoubleLie2,DoubleLie}.
For more details see the monograph of Li Guo~\cite{GuoMonograph}.

A notion of Lie conformal algebra introduced by V.G. Kac in~\cite{Kac} is an important tool to study vertex operator algebras. 
In turn, vertex algebras describe algebraic properties of the operator product expansion (OPE) in the two-dimensional conformal field theory developed by A.A.~Belavin, A.M. Polyakov and A.B. Zamolodchikov~\cite{BPZ} in 1983. In 1986, R. Borcherds confirmed the deep connection between vertex algebras and the Monster group~\cite{Borcherds}.
To the moment, vertex algebras form an actively studied area related to many others such as representation theory and mathematical physics, see~\cite{FB,LL}. 

In 2008, J. Liberati introduced~\cite{Liberati} a notion of Lie conformal bialgebra and suggested so called conformal classical Yang---Baxter equation as a source of (coboundary) Lie conformal bialgebras.
In 2012, this theory was extended by C.~Boyallian and J.~Liberati for Lie pseudobialgebras~\cite{BL}. 

In 2020, Y. Hong and C. Bai introduced~\cite{Bai} a~notion of Rota---Baxter operator on a~Lie conformal algebra and showed that every skew-symmetric solution to the conformal classical Yang---Baxter equation on a Lie conformal algebra~$L$ endowed with an invariant bilinear nondegenerate form gives rise to a~Rota---Baxter operator on~$L$.
This result reproduces the well-known connection between Rota---Baxter operators and solutions to the classical Yang---Baxter equation on a finite-dimensional semisimple Lie algebra~\cite{Belavin1982}.
A~connection between associative algebras and associative Yang---Baxter equation, which does not involve any kind of form, was found by M. Aguiar~\cite{Aguiar}.

Pseudoalgebras defined by B. Bakalov, A. D’Andrea and V.G.~Kac in~\cite{BAK} serve as a~natural generalization of conformal algebras involving a cocommutative Hopf algebra~$H$. For $H = \{e\}$, we get ordinary algebras and for $H = F[\partial]$ we obtain exactly conformal algebras.
A notion of Rota---Baxter operator on ($H$-)pseudoalgebras was suggested by L. Liu and S. Wang in 2020~\cite{Liu2020}. 
Note that this definition applied to conformal algebras differs from the one given by Y.~Hong and C.~Bai.

Properties and cohomologies of Rota---Baxter operators on Lie conformal algebras were recently studied in~\cite{Liu2022',Zhao2021}.
In~\cite{Yuan2022}, cohomologies of associative conformal Rota---Baxter algebras were defined.

In~\cite{SimpleConf}, A. D'Andrea and V.G. Kac proved the following structure result: every simple Lie conformal algebra of finite type
is isomorphic either to the Virasoro conformal algebra $\mathrm{Vir}$ or to the current Lie conformal algebra $\Cur(\mathfrak{g})$ associated to a simple finite-dimensional Lie algebra $\mathfrak{g}$.
In~\cite{Bai}, the authors showed that there are no nontrivial Rota---Baxter operators on $\mathrm{Vir}$.

The main goal of the current work is to describe all Rota---Baxter operators on  the remaining simple Lie conformal Lie algebra $\Cur(\sl_2(\mathbb{C}))$ of rank~1. We solve this problem completely, dealing with zero and nonzero cases separately. 
Since the classification of Rota---Baxter operators on $\sl_2(\mathbb{C})$ is known~\cite{Kolesnikov,KonovDissert,sl2}, 
we can trivially extend them onto $\Cur(\sl_2(\mathbb{C}))$. However, we find other Rota---Baxter operators, which depend on choice of a polynomial~$q(\partial)$.

In~\cite{Cursl2}, all solutions to the classical Yang---Baxter equation on $\Cur(\sl_2(\mathbb{C}))$ were described.
Hence, we may apply the connection established by Y. Hong and C. Bai to get Rota---Baxter operators of weight~0 on $\Cur(\sl_2(\mathbb{C}))$ corresponding to these solutions. 
Doing this, we get all Rota---Baxter operators of weight zero that are defined with an odd polynomial $q(\partial)$.

Let us give a short outline of the work.
In~\S2, we list the required preliminaries on Lie conformal algebras, the conformal classical Yang---Baxter equation and Rota---Baxter operators.
For our purposes, we prove more explicit classification of Rota---Baxter operators of nonzero weight on~$\sl_2(\mathbb{C})$.
In~\S3, we firstly describe Rota---Baxter operators of weight~0 on $\Cur(\sl_2(\mathbb{C}))$ and then show which of them come from the solutions to the conformal classical Yang---Baxter equation.
In~\S4, we classify Rota---Baxter operators of nonzero weight on $\Cur(\sl_2(\mathbb{C}))$.
We also provide examples of Rota---Baxter operators defined on any 
finite-dimensional semisimple Lie algebra over $\mathbb{C}$.

Throughout the work, all algebras and vector spaces are considered over the field of complex numbers.

\section{Preliminaries}

\subsection{Rota---Baxter operators on Lie algebras}

Let $\mathfrak{g}$ be a Lie algebra. A linear operator $R$ on $\mathfrak{g}$ is called a~Rota---Baxter operator
(RB-operator, for short) of weight~$k\in\mathbb{C}$ if

\vspace{-0.3cm}
\begin{equation}\label{RBusual}
[R(a),R(b)]
 = R([R(a),b] + [a,R(b)] + k[a,b])
\end{equation}
\vspace{-0.6cm}

\noindent
for all $a,b\in \mathfrak{g}$.

Note that given an RB-operator $R$ of weight~0 on a Lie algebra $\mathfrak{g}$,
the linear operator $\alpha R$ is again an RB-operator of weight~0 on $\mathfrak{g}$
for any $\alpha\in\mathbb{C}$.

Given a Lie algebra~$L$, we call the operators $R = 0$ and $R = -k\cdot\id$
as trivial Rota---Baxter operators of weight~$k$ on $L$.

\begin{lemma}[\cite{GuoMonograph}]\label{lem:phi}
Let $R$ be an RB-operator of weight~$k$ on a Lie algebra~$\mathfrak{g}$, then 
$\phi(R) = -(R+\lambda\id)$ is again an RB-operator of weight~$k$ on~$\mathfrak{g}$.
\end{lemma}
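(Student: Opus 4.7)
The plan is to carry out a direct substitution and simplification, showing that $\phi(R)$ satisfies identity~(\ref{RBusual}) of weight~$k$ provided $R$ does. (I assume the $\lambda$ in the statement is a typo for $k$, which is the standard involution on RB-operators of weight~$k$.)

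First I would write $R' := \phi(R) = -R - k\id$ and compute the left-hand side of~(\ref{RBusual}) for $R'$ by bilinearity of the bracket:
\begin{equation*}
[R'(a),R'(b)] = [R(a),R(b)] + k[R(a),b] + k[a,R(b)] + k^{2}[a,b].
\end{equation*}
Next I would simplify the argument appearing on the right-hand side:
\begin{equation*}
[R'(a),b] + [a,R'(b)] + k[a,b] = -\bigl([R(a),b] + [a,R(b)] + k[a,b]\bigr),
\end{equation*}
where one of the two $-k[a,b]$ terms cancels against the $+k[a,b]$.

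Applying $R' = -R - k\id$ to this expression and using the Rota---Baxter identity for $R$ of weight~$k$ to replace $R([R(a),b]+[a,R(b)]+k[a,b])$ by $[R(a),R(b)]$, I obtain
\begin{equation*}
R'\bigl([R'(a),b] + [a,R'(b)] + k[a,b]\bigr) = [R(a),R(b)] + k[R(a),b] + k[a,R(b)] + k^{2}[a,b],
\end{equation*}
which matches the expression for $[R'(a),R'(b)]$ computed above. There is no real obstacle: the verification is a one-line algebraic identity once the substitutions are made, so the only thing to be careful about is keeping track of signs and the cancellation of the stray $k[a,b]$ term in the argument of~$R'$.
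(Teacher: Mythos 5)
Your verification is correct, and your reading of $\lambda$ as a typo for the weight $k$ is the right one. The paper itself gives no proof (it cites Guo's monograph), and your direct substitution is exactly the standard argument; all signs and the cancellation of the extra $k[a,b]$ check out.
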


\begin{lemma}[\cite{BGP}]
Let $\mathfrak{g}$ be a Lie algebra, $\varphi$ be an automorphism of $\mathfrak{g}$
and $R$ be an RB-operator of weight~$k$ on $\mathfrak{g}$.
Then $R^{(\varphi)} = \varphi^{-1}R\varphi$
is an RB-operator of weight~$k$ on $\mathfrak{g}$.
\end{lemma}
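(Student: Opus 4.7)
The plan is a direct verification: substitute $R^{(\varphi)}=\varphi^{-1}R\varphi$ into the Rota---Baxter identity~\eqref{RBusual} and use the fact that $\varphi$ (hence $\varphi^{-1}$) commutes with the Lie bracket, i.e.\ $\varphi([x,y]) = [\varphi(x),\varphi(y)]$ and $\varphi^{-1}([x,y]) = [\varphi^{-1}(x),\varphi^{-1}(y)]$, so that brackets can be pulled in and out of $\varphi,\varphi^{-1}$ freely.

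Concretely, I would first rewrite the left-hand side as
\[
[R^{(\varphi)}(a),R^{(\varphi)}(b)] = [\varphi^{-1}R\varphi(a),\varphi^{-1}R\varphi(b)] = \varphi^{-1}\bigl([R\varphi(a),R\varphi(b)]\bigr),
\]
using that $\varphi^{-1}$ is a homomorphism. Then I would apply the Rota---Baxter identity for $R$ with the arguments $\varphi(a),\varphi(b)$ to replace the inner bracket with
\[
R\bigl([R\varphi(a),\varphi(b)] + [\varphi(a),R\varphi(b)] + k[\varphi(a),\varphi(b)]\bigr).
\]

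The final step is to pull $\varphi^{-1}$ through $R$ by writing $\varphi^{-1}R = \varphi^{-1}R\varphi\,\varphi^{-1} = R^{(\varphi)}\varphi^{-1}$, and then distribute $\varphi^{-1}$ over the three brackets inside, each time converting $\varphi^{-1}R\varphi$ back to $R^{(\varphi)}$ and cancelling a leftover $\varphi^{-1}\varphi$. This yields exactly
\[
R^{(\varphi)}\bigl([R^{(\varphi)}(a),b] + [a,R^{(\varphi)}(b)] + k[a,b]\bigr),
\]
which is the required identity. There is no real obstacle here: the computation is a single formal manipulation, and the only thing to keep track of carefully is the bookkeeping of where $\varphi$ and $\varphi^{-1}$ get inserted so that each bracket ends up with both arguments in the original (un-transformed) copy of $\mathfrak{g}$.
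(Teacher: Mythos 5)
Your verification is correct and complete: the computation is the standard direct check, and each step (pulling $\varphi^{-1}$ through the bracket, applying the Rota---Baxter identity for $R$ at $\varphi(a),\varphi(b)$, and reinserting $\varphi\varphi^{-1}$ to recover $R^{(\varphi)}$) is valid. The paper itself gives no proof, simply citing \cite{BGP}, and your argument is exactly the routine verification that reference supplies.
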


Let a Lie algebra $\mathfrak{g}$ splits as a vector space
into direct sum of two subalgebras $\mathfrak{g}_1$ and~$\mathfrak{g}_2$.
An operator $R$ defined as
\begin{equation}\label{Split}
R(a_1 + a_2) = -k a_2,\quad a_1\in \mathfrak{g}_1,\ a_2\in \mathfrak{g}_2,
\end{equation}
is an RB-operator of weight~$k$ on~$\mathfrak{g}$ called a splitting RB-operator~\cite{GuoMonograph}.

\begin{example}
Let $\mathfrak{g}$ be a finite-dimensional semisimple Lie algebra over $\mathbb{C}$
with a~root system~$\Phi$.
Let a linear operator $R$ acts on $\mathfrak{g}$ as follows,
$R(\mathfrak{h}) = 0$ for the~fixed Cartan subalgebra $\mathfrak{h}$ of $\mathfrak{g}$, $R(e_\lambda) = -e_\lambda$, when $\lambda\in\Phi_+$, and $R(e_\lambda) = 0$ for all $\lambda\in\Phi_-$.
Then $R$ is a splitting Rota---Baxter operator of weight~1 on $\mathfrak{g}$.
\end{example}

Throughout this paper let us fix the standard basis $e,f,h$ of $\sl_2(\mathbb{C})$ such that
$$
[e,f] =  h, \quad
[h,e] = 2e, \quad
[h,f] = -2f.
$$

\begin{proposition}[\cite{Kolesnikov}] \label{prop:RBOnsl2-0}
Up to conjugation with an automorphism of $\sl_2(\mathbb{C})$ and up to a~scalar multiple,
we have that a~Rota---Baxter operator $R$ of weight 0 on $\sl_2(\mathbb{C})$
is one of the following:

a) $R \equiv 0$,

b) $R(e) = 0$, $R(f) = te - h$, $R(h) = 2e$, $t\in\mathbb{C}$,

c) $R(e) = 0$, $R(f) = 0$, $R(h) = h$,

d) $R(e) = 0$, $R(f) = h$, $R(h) = 0$,

e) $R(e) = 0$, $R(f) = e$, $R(h) = 0$.
\end{proposition}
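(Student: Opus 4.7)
The plan is to represent $R$ as a linear map in the basis $e, f, h$ and run a case analysis on $\dim\Imm R$. The starting observation is that for weight zero the identity $[R(a), R(b)] = R([R(a),b] + [a, R(b)])$ forces $\Imm R$ to be closed under the bracket, hence a Lie subalgebra of $\sl_2(\mathbb{C})$. Up to conjugation by $\Aut(\sl_2(\mathbb{C})) \cong \mathrm{PSL}_2(\mathbb{C})$, the Lie subalgebras are $0$, $\mathbb{C}e$, $\mathbb{C}h$, the Borel $\mathbb{C}e + \mathbb{C}h$, and $\sl_2(\mathbb{C})$ itself. The rank-$0$ case is (a). For the rank-$3$ case I would substitute $a = R^{-1}(x)$, $b = R^{-1}(y)$ into the RB identity and apply $R^{-1}$ to get $R^{-1}[x,y] = [x, R^{-1}(y)] + [R^{-1}(x), y]$, so $R^{-1}$ is a derivation of $\sl_2(\mathbb{C})$, hence inner of the form $\ad u$; but $\ad u$ annihilates $u$, contradicting invertibility.

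For $\dim\Imm R = 1$ I would write $R(a) = \alpha(a) v$, where $v$ spans $\Imm R$. Then $[R(a), R(b)] = 0$ and the RB identity collapses to the single requirement that the functional $\alpha\circ\ad v$ be a scalar multiple of $\alpha$. Taking $v$ in one of the two $\Aut$-conjugacy forms $v = h$ or $v = e$, using the outer involution $e \leftrightarrow f$, $h \mapsto -h$ where needed to absorb the subcase $R(e) = \lambda h$ into $R(f) = \lambda h$, and rescaling the free scalar in $\alpha$, the proportionality condition solves by hand on the basis and yields exactly the three operators (c), (d), (e).

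The main obstacle is the rank-$2$ branch. I would conjugate so that $\Imm R = \mathbb{C}e + \mathbb{C}h$ and write
\[
R(e) = a_1 e + a_3 h, \qquad R(f) = b_1 e + b_3 h, \qquad R(h) = c_1 e + c_3 h.
\]
Plugging into the three RB-identities for $(e,f)$, $(e,h)$, $(f,h)$ and separating the coefficients of $e$ and $h$ produces six polynomial relations in the six unknowns. I would split on $a_3$. The branch $a_3 \neq 0$ forces $c_3 = -a_1$, $c_1 = -a_1^2/a_3$, $b_1 = -a_1^3/(4 a_3^2)$, $b_3 = -a_1^2/(4 a_3)$, which makes $R(e)$, $R(f)$, $R(h)$ mutually proportional; the actual rank is then $1$, contradicting the rank-$2$ hypothesis, and so no new operators arise. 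The branch $a_3 = 0$ forces $a_1 = 0$ (i.e.\ $R(e) = 0$) and the remaining equations quickly reduce to $c_3 = 0$ and $c_1 = -2 b_3$ with $b_3 \neq 0$ and $b_1$ free; rescaling $R$ so that $b_3 = -1$ and relabelling $t = b_1$ recovers exactly the family (b).
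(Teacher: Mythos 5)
The paper does not actually prove this proposition --- it is imported from Kolesnikov's work with a citation only --- so there is no in-house argument to measure you against; what you have written is a self-contained proof that the paper omits. Your route is sound: for weight $0$ the image of $R$ is indeed a subalgebra, the list of subalgebras of $\sl_2(\mathbb{C})$ up to conjugacy is as you state, the invertible case correctly reduces to $R^{-1}$ being a derivation (hence inner, hence singular), and the rank-one condition $\alpha\circ\ad v = c\,\alpha$ solved for $v=h$ and $v=e$ does produce exactly (c), (d), (e) after using the involution $e\leftrightarrow f$, $h\mapsto -h$ and rescaling. I checked the six relations in the Borel case; with your notation they read $4a_3b_1=a_1c_1$, $a_1c_3=4a_3b_3$, $a_3c_1=-a_1^2$, $a_3(a_1+c_3)=0$ together with the two components of the $(f,h)$ identity, and the branch $a_3\neq 0$ does force all three images to be proportional to $a_1e+a_3h$, as you claim. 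The one place you compress too much is the branch $a_3=a_1=0$: the $h$-component of the $(f,h)$ identity is $c_3(2b_3-c_1)=0$, so besides $c_3=0$ there is the subcase $c_3\neq 0$, $c_1=2b_3$, in which the $e$-component forces $b_1c_3-2b_3^2=0$; this is precisely the vanishing of the determinant of the pair $R(f),R(h)$ in the basis $e,h$, so $R(f)$ and $R(h)$ are proportional and the rank-two hypothesis fails. You should state this explicitly rather than assert that the equations ``quickly reduce''; with that subcase written out, the argument is complete and yields exactly the list (a)--(e), including the free parameter $t$ in (b).
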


The following result was actually proven in~\cite{KonovDissert} and in~\cite{sl2}.
Since it was not stated in the form we are interested, let us prove it directly.

\begin{proposition}[\cite{KonovDissert,sl2}] \label{prop:RBOnsl2-1}
Up to conjugation with an automorphism, all nontrivial RB-operators of weight~1 on $\mathrm{sl}_2(\mathbb{C})$ are the following:

a) $R(e) = -e$, $R(f) = R(h) = 0$,

b) $R(e) = -(e + h)$, $R(f) = R(h) = 0$,

c) $R(e) = -e$, $R(f) = 0$, $R(h) = th$, $t\in\mathbb{C}^*$.
\end{proposition}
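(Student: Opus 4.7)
The plan is to exploit the classical fact (\cite{GuoMonograph}) that for any Rota---Baxter operator $R$ of weight~$k$ on a Lie algebra, $\Imm R$ is a subalgebra. In $\sl_2(\mathbb{C})$ the proper nonzero subalgebras are, up to $\Aut(\sl_2(\mathbb{C}))$-conjugation, the nilpotent one-dimensional $\langle e\rangle$, the semisimple one-dimensional $\langle h\rangle$, and the Borel $\langle e,h\rangle$. By conjugation with an automorphism of $\sl_2(\mathbb{C})$ we may therefore assume that $\Imm R$ lies in one of these forms (or that $R$ is invertible). In each case we parametrize $R$ by the remaining coefficients, substitute into the three RB-identities on the pairs $(e,f)$, $(e,h)$, $(f,h)$, and solve the resulting polynomial system; the stabilizer of $\Imm R$ in $\Aut(\sl_2(\mathbb{C}))$, together with the involution $\phi$ from Lemma~\ref{lem:phi}, then reduces every solution to one of (a)--(c).

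The invertible case $\Imm R = \sl_2(\mathbb{C})$ is excluded by the following argument. Setting $S = R^{-1}$ in the RB-identity and applying $S$ to both sides yields
$$
[(\id + S)(x), (\id + S)(y)] = (\id + S)([x,y]),
$$
so $\id + S$ is a Lie algebra endomorphism of the simple algebra $\sl_2(\mathbb{C})$. Therefore $\id + S$ is either zero (giving the trivial $R = -\id$) or an automorphism $\varphi$, in which case $R = (\varphi - \id)^{-1}$. But every non-identity $\varphi \in \Aut(\sl_2(\mathbb{C}))$ fixes a nonzero vector of $\sl_2(\mathbb{C})$ (the generator of its one-dimensional centralizer), so $\varphi - \id$ is not invertible, a contradiction.

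For $\Imm R = \langle e\rangle$, writing $R(e) = pe$, $R(f) = qe$, $R(h) = re$ and imposing the RB-identities yields the equations $p(p+1) = 0$, $(p+1)r = 0$, $r^2 = 2q(1-p)$; the only nontrivial branch is $p = -1$, $q = r^2/4$, and conjugation by $\exp(-(r/2)\ad e)$ reduces $(q,r)$ to $(0,0)$, giving case~(a). For $\Imm R = \langle h\rangle$, setting $R(e) = ph$, $R(f) = qh$, $R(h) = rh$ forces $r = -1$ and $pq = -1/4$; after normalizing $(p,q)$ with the torus stabilizer, an explicit automorphism sending the semisimple generator $h$ to $e + h$ transforms the resulting operator to case~(b). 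For $\Imm R = \langle e,h\rangle$, writing $R$ as a map into the Borel with six unknown coefficients and using the torus stabilizer $e\mapsto \lambda e$, $f\mapsto \lambda^{-1}f$, $h\mapsto h$ of the Borel eventually produces case~(c), with $t \in \mathbb{C}^*$ (the value $t = 0$ being already covered by~(a)).

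The main obstacle is the Borel-image case, where the polynomial system has several branches: besides the family~(c), there are solutions corresponding to splittings $\sl_2(\mathbb{C}) = \langle e,h\rangle \oplus \langle v\rangle$ with $v$ of various $\ad$-types, which must be matched back to~(a)--(c) after applying the involution~$\phi$ together with the Weyl reflection $e\leftrightarrow f$, $h \mapsto -h$. A secondary subtlety is confirming that cases~(a) and~(b) are genuinely inequivalent under $\Aut(\sl_2(\mathbb{C}))$; this is detected by the nilpotent ($\langle e\rangle$) versus semisimple ($\langle e + h\rangle$) $\ad$-type of the generator of~$\Imm R$.
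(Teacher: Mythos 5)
Your overall strategy---stratifying by $\Imm R$, which is always a subalgebra, and hence up to conjugation equal to $(0)$, $\langle e\rangle$, $\langle h\rangle$, the Borel $\langle e,h\rangle$, or all of $\sl_2(\mathbb{C})$---is sound and genuinely different from the paper's proof, which instead works with $\ker R$ and $\ker(R+\id)$, splits off the case of a splitting RB-operator, and handles the rest via the descendant bracket $\langle x,y\rangle = [R(x),y]+[x,R(y)]+[x,y]$ and the Jordan form of $R$. Your treatment of the invertible case is a nice self-contained replacement for the paper's citation of \cite{BurGub}: the identity $[(\id+S)x,(\id+S)y]=(\id+S)([x,y])$ for $S=R^{-1}$ is correct, and so is the observation that every automorphism of $\sl_2(\mathbb{C})$ fixes a nonzero vector (for $\varphi=\mathrm{Ad}_g$ the element $g-\tfrac12\mathrm{tr}(g)I$ is a fixed traceless vector). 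I also checked your two rank-one cases: the systems $p(p+1)=0$, $(p+1)r=0$, $r^2=2q(1-p)$ and $r=4pq$, $p(r+1)=0$, $q(r+1)=0$ are exactly right, the conjugation by $\exp(-(r/2)\ad e)$ does kill $(q,r)$ in the first case, and the second case does land on a splitting operator with Borel kernel and semisimple image, hence on case~(b).

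The genuine gap is the Borel-image case, which you yourself flag as ``the main obstacle'' but then do not carry out. This is precisely where the one-parameter family~(c) lives, where the paper has to rule out a non-semisimple Jordan block (its Case~I contradiction via the Jacobi identity) and where all the potentially extra branches sit; asserting that the nine-equation system in six unknowns ``eventually produces case~(c)'' after the other branches ``are matched back'' is a statement of intent, not a proof. A secondary flaw is your use of the involution $\phi(R)=-(R+\id)$ inside the reduction: $\phi$ does not preserve $\Imm R$ (for instance $\phi$ applied to case~(a) has image the Borel $\langle f,h\rangle$), so ``reducing every solution within a stratum using $\phi$'' is not coherent as stated; moreover, since the proposition classifies only up to conjugation, any use of $\phi$ must be accompanied by a check that the list (a)--(c) is closed under $\phi$ up to conjugation (it is: $\phi$ of~(a) is conjugate to~(c) with $t=-1$ via the Weyl automorphism, etc.), which you neither state nor verify. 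Until the Borel case is actually solved and the role of $\phi$ is made precise, the argument does not establish the proposition.
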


\begin{proof}
Let $R$ be an RB-operator of weight~1 on $\sl_2(\mathbb{C})$.
If either $\ker(R) = (0)$ or $\ker(R+\id) = (0)$, then
$R$~is trivial~\cite{BurGub}.

Suppose that $R$ is splitting, i.\,e. 
$\sl_2(\mathbb{C}) = A\oplus B$, where 
$A$ and $B$ are subalgebras of $\sl_2(\mathbb{C})$
and $A = \ker R$, $B = \ker(R+\id)$.
We may assume that $\dim A = 2$ and $\dim B = 1$.
It is known that up to action of $\Aut(\sl_2(\mathbb{C}))$,
$A = \Span\{f,h\}$. Thus, $B = \Span\{e + kf + lh\}$ for some $k,l\in\mathbb{C}$.
Let $k = 0$, then for $l = 0$ it is~a).
When $l\neq0$ we conjugate with 
$\psi\in\Aut(\sl_2(\mathbb{C}))$ defined as follows,
$\psi(e) = (1/l)e$, $\psi(f) = lf$, $\psi(h) = h$,
and get~b) for $R^{(\psi)}$.

Suppose that $k\neq0$.
Consider $\psi\in \Aut(\sl_2(\mathbb{C}))$ defined as follows,
$$
\psi(e) = e - (k-\alpha l)f - (\alpha/2)h,\quad
\psi(f) = f,\quad
\psi(h) = h + \alpha f,\quad
\alpha = 2(-l+\sqrt{l^2+k}).
$$
Then for the RB-operator $R' = \psi R\psi^{-1}$, we have $\ker R' = \Span\{f,h\}$
and $\ker(R'+\id) = \Span\{e + \sqrt{l^2+k}h\}$, it is the already considered case.

Suppose that $R$ is not splitting, then $\dim(\ker R) = \dim(\ker(R+\id))=1$. It is known (see, e.\,g.~\cite{BurGub}) that
the space $\sl_2(\mathbb{C})$ under the new product
\begin{equation} \label{DesProduct}
\langle x,y\rangle := [R(x),y] + [x,R(y)] + [x,y] 
\end{equation}
is again a Lie algebra, and $R,R+\id$ are homomorphisms from
$(\sl_2(\mathbb{C}),\langle,\rangle)$ to $(\sl_2(\mathbb{C}),[,])$.
Hence, $\ker R,\ker(R+\id)$ are ideals in $(\sl_2(\mathbb{C}),\langle,\rangle)$.

Since we work over $\mathbb{C}$, let us consider all possible cases of the Jordan form of $R$. Let $e_2,e_3\in \sl_2(\mathbb{C})$ be such that 
$R(e_2) = 0$ and $R(e_3) = -e_3$.

{\sc Case I}: $\Spec(R) = \{0,-1\}$. 
Up to action of $\phi$, we may assume that there exists $e_1$ such that $R(e_1) = e_2$. Since both kernels are ideals in 
$(\sl_2(\mathbb{C}),\langle,\rangle)$,
we derive by~\eqref{DesProduct},
$$
\langle e_1,e_2\rangle = [e_1,e_2] = \mu e_2, \quad
\langle e_1,e_3\rangle = [e_2,e_3] = \lambda e_3
$$
for some (nonzero) $\lambda,\mu\in \mathbb{C}$.
Suppose that $[e_1,e_3] = \alpha e_1 + \beta e_2 + \gamma e_3$.
Then due to the Jacobi identity, we have 
\begin{multline*}
0 = [[e_1,e_2],e_3] + [[e_2,e_3],e_1] + [[e_3,e_1],e_2] 
  = \mu\lambda e_3 - \lambda(\alpha e_1 + \beta e_2 + \gamma e_3)   
  - \mu\alpha e_2 + \lambda \gamma e_3 \\
  = -\lambda\alpha e_1 - (\lambda\beta+\mu\alpha)e_2 + \mu\lambda e_3.
\end{multline*}
We get $\mu\lambda=0$, a~contradiction.

{\sc Case II}: $\Spec(R) = \{0,-1,t\}$, where $t\neq0,-1$. 
Let $e_1$ be an eigenvector corresponding to the eigenvalue~$t$.
Again, we write down,
$$
\langle e_1,e_2\rangle = (t+1)[e_1,e_2] = \mu e_2, \quad
\langle e_1,e_3\rangle = t[e_1,e_3] = \lambda e_3
$$
for some nonzero $\lambda,\mu\in \mathbb{C}$.
Suppose that $[e_2,e_3] = \alpha e_1 + \beta e_2 + \gamma e_3$.
Then the Jacobi identity implies $\beta = \gamma = 0$ and
$\lambda = -t\mu/(t+1)$. Considering $e_2/\sqrt{\alpha}$ and 
$e_3/\sqrt{\alpha}$ instead of $e_2$ and $e_3$, we get the following multiplication table:
$$
[e_1,e_2] = \Lambda e_2, \quad
[e_1,e_3] = -\Lambda e_3, \quad
[e_2,e_3] = e_1,
$$
where $\Lambda = \mu/(t+1)$. Also, $R(e_1) = te_1$, $R(e_2) = 0$ and $R(e_3) = -e_3$. 
Then $R$ is the RB-operator from the case c) up to conjugation with $\psi\in\Aut(\sl_2(\mathbb{C}))$
defined by the rule 
$\psi(e_1) = (\Lambda/2)h$, 
$\psi(e_2) = \sqrt{\Lambda/2}e$, 
$\psi(e_3) = \sqrt{\Lambda/2}f$. 
\end{proof}

\subsection{Conformal Lie algebras}

A (free) $\mathbb {C}[\partial]$-module $L$ is called a conformal algebra
if there is a $\lambda$-bracket on $L$,
$$
[\cdot_\lambda\cdot]\colon L\otimes L\to \mathbb{C}[\lambda]\otimes L,
$$
satisfying the identities,
$$
[\partial a_\lambda b] = -\lambda [a_\lambda b],\quad
[a_\lambda\partial b]=(\lambda +\partial)[a_\lambda b].
$$

A conformal algebra~$L$ is called a Lie conformal algebra if $L$ satisfies
the following conformal analogues of anticommutativity and the Jacobi identity:
$$
[a_\lambda b]=-[b_{-\lambda-\partial}a], \quad
[a_\lambda[b_\mu c]]
 - [b_\mu[a_\lambda c]]
 = [[a_\lambda b]_{\lambda+\mu}c].
$$

The Virasoro Lie conformal algebra $\mathrm{Vir}$ is defined as follows:
$$
\mathrm{Vir} = \mathbb{C}[\partial]L,\quad [L_{\lambda}L] = (\partial + 2\lambda)L.
$$

Given a Lie algebra~$\mathfrak{g}$, the current Lie conformal algebra $\Cur(\mathfrak{g})$
on the space $\mathbb{C}[\partial]\mathfrak{g}$ is defined by the formula
\begin{equation} \label{eq:ProductInCur(g)}
[f(\partial)a_\lambda g(\partial)b]
 = f(-\lambda)g(\lambda+\partial)[a,b], \quad a,b\in \mathfrak{g}.
\end{equation}

A left module $M$ over a Lie conformal algebra $L$ is a left
$\mathbb{C}[\partial]$-module endowed with a $\mathbb{C}$-linear map 
$(\cdot_\lambda\cdot)\colon C\otimes M \to M[\lambda]$ satisfying the identities
$$
\partial a_\lambda v = - \lambda a_\lambda v, \quad 
a_\lambda \partial v = (\partial+\lambda) a_\lambda v, \quad
[a_\lambda b]_{\lambda + \mu} v = a_\lambda (b_\mu v) - b_\mu (a_\lambda v)
$$
for all $a, b \in L$, $v \in M$.

Given a Lie conformal algebra~$L$, the space $L^{\otimes n}$ 
is a left $L$-module under the action
$$
a_\lambda (a_1 \otimes \dots \otimes a_n) 
 = \sum\limits_{i=1}^n a_1 \otimes \dots \otimes [a_\lambda a_i] \otimes \dots \otimes a_n,
$$
where $a, a_1, \dots a_n \in L$.

A~$\mathbb{C}[\partial]$-submodule $I$ of $L$ is called an ideal of $L$ if
$[I_\lambda L]\subset I$.
A Lie conformal algebra~$L$ is of finite type if $L$ is finitely-generated as $\mathbb{C}[\partial]$-module.
A Lie conformal algebra~$L$ is called simple if $[L_\lambda L]\neq(0)$
and there are only two ideals of $L$: $(0)$ and $L$.

Recall~\cite{SimpleConf} that every simple Lie conformal algebra of finite type
is isomorphic either to $\mathrm{Vir}$ or to $\Cur(\mathfrak{g})$
associated to a simple finite-dimensional Lie algebra $\mathfrak{g}$.

Given a Lie algebra~$\mathfrak{g}$ and an automorphism~$\varphi$ of $\mathfrak{g}$, we can extend it to 
an automorphism of $\Cur(\mathfrak{g})$ as a $\partial$-linear operator by the formula
$\varphi(f(\partial)a) = f(\partial)\varphi(a)$, $a\in \mathfrak{g}$.

\subsection{Conformal classical Yang---Baxter equation}

Let $L$ be a Lie conformal algebra and 
$r = \sum\nolimits a_i \otimes b_i \in L\otimes L$. Set 
$\partial_{\otimes 1} = \partial \otimes 1 \otimes 1$, 
$\partial_{\otimes 2} = 1 \otimes \partial \otimes 1$, 
$\partial_{\otimes 3} = 1 \otimes 1\otimes \partial$, and 
$\partial^{\otimes 3} =  \partial_{\otimes 1} + \partial_{\otimes 2} + \partial_{\otimes 3}$. 
The following equation,
\begin{multline} \label{CCYBE}
\llbracket r, r\rrbracket 
 := \sum\nolimits ([{a_i}_\lambda a_j] \otimes b_i \otimes b_j
\rvert_{\lambda = \partial_{\otimes 2}} 
- a_i \otimes [{a_j}_\lambda b_i] \otimes b_j \rvert_{\lambda 
= \partial_{\otimes 3}} \\
- a_i \otimes a_j \otimes [{b_j}_\lambda b_i] 
\rvert_{\lambda = \partial_{\otimes 2}}) = 
0~(\!\!\!\!\!\!\mod~\partial^{\otimes 3}),
\end{multline}
holding in $L^{\otimes 3}$, is called the~conformal classical Yang---Baxter equation (CCYBE)~\cite{Liberati}
and $r\in L\otimes L$ satisfying~\eqref{CCYBE} is called a solution to CCYBE.

The following equation fulfilled for all $a \in L$ is called the weak CCYBE:
\begin{equation} \label{weak-CCYBE}
a_\mu \llbracket r, r\rrbracket = 0\ (\!\!\!\!\!\!\mod \mu = - \partial^{\otimes 3}).
\end{equation}

A solution $r$ to CCYBE (or the weak one) is called $L$-invariant, 
if the following equality holds for every $a\in L$,
\begin{equation} \label{ConfInv}
a_{\lambda}(r + \tau(r))\rvert_{\lambda = - \partial^{\otimes 2}} = 0,
\end{equation}
where $\partial^{\otimes2} = \partial\otimes 1 + 1 \otimes \partial$
and $\tau\colon L\otimes L\to L\otimes L$ is defined as follows,
$\tau(a\otimes b) = b\otimes a$. 
A solution~$r$ to the (weak) CCYBE is called skew-symmetric if $r+\tau(r) = 0$.

In~\cite{Liberati}, J. Liberati proved that given a conformal Lie algebra~$L$ and $r\in L\otimes L$, the map
$\delta(a) = a_\lambda r|_{\lambda = -\partial^{\otimes2}}$
is a cocommutator of a conformal Lie bialgebra structure on~$L$ if and only if $r$ is an $L$-invariant solution to the weak CCYBE on~$L$.

\subsection{Rota---Baxter operators on conformal Lie algebras}

Given a Lie conformal algebra $L$, a $\partial$-linear map $R$ on $L$
is called a~Rota---Baxter operator (RB-operator, for short) of weight~$k\in\mathbb{C}$~\cite{Bai} if
\begin{equation}\label{RB}
[R(a)_\lambda R(b)]
 = R([R(a)_\lambda b] + [a_\lambda R(b)] + k[a_\lambda b])
\end{equation}
for all $a,b\in L$.

Given a Lie conformal algebra~$L$, let us call the operators
$R = 0$ and $R = -k\cdot\id$
as trivial Rota---Baxter operators of weight~$k$ on $L$.

In~\cite{Bai}, it was shown that there are only trivial RB-operators of weight~0 on $\mathrm{Vir}$.
It is easy to extend this result to the case of RB-operators of nonzero weight.

\begin{example}[\cite{Bai}]
Let $L = \mathbb{C}[\partial]a \oplus \mathbb{C}[\partial]b$ be a Lie conformal algebra of rank 2 with
the $\lambda$-bracket given by the formulas,
$$
[a_\lambda a] = (\partial + 2\lambda)a,\quad 
[a_\lambda b] = (\partial + \lambda)b, \quad
[b_\lambda b] = 0.
$$
Then any Rota--–Baxter operator of weight~0 on $L$ is one of the
following two forms:

(i) $R(a) = -\mu(a + b)$, $R(b) = \mu(a + b)$, where $\mu\in \mathbb{C}\setminus\{0\}$; 

(ii) $R(a) = g(\partial)b$, $R(b) = 0$, where $g(\partial) \in \mathbb{C}[\partial]$.
\end{example}

Given a Lie algebra $\mathfrak{g}$ and a $\partial$-linear map $R$ on $L = \Cur(\mathfrak{g})$,
define a linear operator $R_0$ on $\mathfrak{g}$ by the rule
$$
R_0(e_i) = R(e_i)|_{\partial = 0},
$$
where $\{e_i\mid i\in I\}$ is a linear basis of $\mathfrak{g}$. Note that the definition of $R_0$
does not depend on choice of a linear basis $\{e_i\}$.

\begin{lemma} 
Let $L = \Cur(\mathfrak{g})$ be a current conformal Lie algebra.

a) Suppose that $R$ is an RB-operator of weight~$k$ on $L$, then
$R_0$ is an RB-operator of weight~$k$ on $\mathfrak{g}$.

b) Suppose that $P$ is an RB-operator of weight~$k$ on $\mathfrak{g}$,
then the extension of $P$ on $L$ by the rule
$P(f(\partial)a) = f(\partial)P(a)$, where $a\in \mathfrak{g}$,
is an RB-operator of weight~$k$ on $L$.
\end{lemma}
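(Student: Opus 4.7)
The plan is to prove (b) by direct substitution into the conformal RB identity~\eqref{RB} and then deduce (a) by specializing the same identity at $\lambda = 0 = \partial$.

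For part (b), I would take arbitrary elements $f(\partial)a,\,g(\partial)b \in L$ with $a,b\in\mathfrak{g}$ and plug them into \eqref{RB}. Formula~\eqref{eq:ProductInCur(g)} lets me evaluate every $\lambda$-bracket as the scalar polynomial $f(-\lambda)g(\lambda+\partial)$ times a bracket in $\mathfrak{g}$. Since the extension $P(f(\partial)a) = f(\partial)P(a)$ is $\partial$-linear by construction and $\mathbb{C}[\lambda]$-linear in the formal parameter $\lambda$, the common factor $f(-\lambda)g(\lambda+\partial)$ pulls outside of $P$ on the right-hand side. What remains is the identity $[P(a),P(b)] = P([P(a),b]+[a,P(b)]+k[a,b])$ multiplied by this scalar factor, and this holds because $P$ is an RB-operator of weight~$k$ on~$\mathfrak{g}$.

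For part (a), I would fix a basis $\{e_i\}$ of $\mathfrak{g}$ and write $R(a) = \sum_i p_i(\partial)e_i$, $R(b) = \sum_j q_j(\partial)e_j$ for $a,b \in \mathfrak{g} \subset L$. Expanding each side of~\eqref{RB} via~\eqref{eq:ProductInCur(g)} and using the $\partial$-linearity of $R$ to move polynomials in $\lambda$ and $\partial$ through $R$ produces an equality in $\mathbb{C}[\lambda]\otimes L$. Setting $\lambda = 0$ and $\partial = 0$ collapses each sum $\sum_i p_i(0)e_i$ to $R_0(a)$ (and similarly for $b$), so the left-hand side becomes $[R_0(a),R_0(b)]$ and the right-hand side becomes
$$
R_0([R_0(a),b] + [a,R_0(b)] + k[a,b]),
$$
which is precisely the RB identity~\eqref{RBusual} for $R_0$ on~$\mathfrak{g}$.

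The only delicate point in either direction is justifying that factors of the form $q_j(\lambda+\partial)$ may be moved past $R$ (or $P$). This follows by expanding such a polynomial in powers of $\partial$ with coefficients in $\mathbb{C}[\lambda]$ and applying $\mathbb{C}[\partial]$-linearity termwise, which is built into the definition of a Rota---Baxter operator on a Lie conformal algebra. Once this is granted, both parts of the lemma reduce to routine bookkeeping.
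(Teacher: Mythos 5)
Your proposal is correct and follows essentially the same route as the paper: part (a) is obtained by specializing the conformal identity~\eqref{RB} at $\lambda=\partial=0$ (the paper writes $R(a)=R_0(a)+\sum_{k\ge1}\partial^kR_k(a)$ where you expand in a basis, which is the same computation), and part (b) is the direct verification that the paper dismisses as immediate from the definitions. Your extra care about moving $q_j(\lambda+\partial)$ past $R$ via $\mathbb{C}[\partial]$-linearity is exactly the point the paper leaves implicit, so nothing is missing.
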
 

\begin{proof}
a) Given $a\in \mathfrak{g}$, we may present $R(a)$ as follows,
$$
R(a) = R_0(a) + \sum\limits_{k\geq1}\partial^k R_k(a),
$$
where $R_k(a)\in \mathfrak{g}$.
Then the identity~\eqref{RB} written down for $a,b\in \mathfrak{g}$
and $\lambda = \partial = 0$ gives exactly~\eqref{RBusual}.

b) It follows directly from the definitions of RB-operators and
$\lambda$-product in the current algebra.
\end{proof}

Analogously to Lemma~\ref{lem:phi}, a conjugation $R^{(\psi)}$
of an RB-operator~$R$ on a Lie conformal algebra~$L$
with an automorphism $\psi$ of~$L$ is again an RB-operator of the same weight~\cite{Zhao2021}.

\begin{corollary} \label{coro}
Let $L = \Cur(\mathfrak{g})$ be a current conformal Lie algebra,
let $R$ be an RB-operator of weight~$\lambda$ on $L$,
and let $\psi_0$ be an automorphism of $\mathfrak{g}$.
Then $(R^{(\psi)})_0 = (R_0)^{(\psi_0)}$,
where $\psi$ is a $\partial$-invariant extension of $\psi_0$
to an automorphism of $L$.
\end{corollary}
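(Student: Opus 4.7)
The plan is to unpack definitions and chase them on a basis element of $\mathfrak{g}$, then invoke $\mathbb{C}[\partial]$-linearity to extend. Since both $(R^{(\psi)})_0$ and $(R_0)^{(\psi_0)}$ are by construction linear operators on $\mathfrak{g}$, it suffices to check equality on an arbitrary $a \in \mathfrak{g}$.

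First I would write $R^{(\psi)}(a) = \psi^{-1}(R(\psi(a))) = \psi^{-1}(R(\psi_0(a)))$, using that $\psi$ restricts to $\psi_0$ on $\mathfrak{g} \subset L$. Next, expand $R(\psi_0(a)) = \sum_{k \geq 0} \partial^k c_k$ with $c_k \in \mathfrak{g}$; by the very definition of $R_0$ we have $c_0 = R_0(\psi_0(a))$. Because $\psi$ is the $\partial$-invariant extension of $\psi_0$, so is $\psi^{-1}$, and therefore
\begin{equation*}
\psi^{-1}\Bigl(\sum_{k \geq 0} \partial^k c_k\Bigr) = \sum_{k \geq 0} \partial^k \psi_0^{-1}(c_k).
\end{equation*}
Evaluating at $\partial = 0$ gives $(R^{(\psi)})_0(a) = \psi_0^{-1}(c_0) = \psi_0^{-1}(R_0(\psi_0(a))) = (R_0)^{(\psi_0)}(a)$, which is the desired identity.

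There is essentially no obstacle here: the statement is really a compatibility of two operations (conjugation by an extended automorphism and the ``constant-term at $\partial = 0$'' projection), both of which are $\mathbb{C}[\partial]$-equivariant by design. The only place a subtlety could enter would be if $\psi^{-1}$ failed to commute with the projection to the $\partial = 0$ component, but $\partial$-invariance of $\psi$ (equivalently, $\psi_0$-homogeneity on each $\partial^k \mathfrak{g}$) makes this immediate.
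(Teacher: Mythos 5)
Your proof is correct; the paper states this corollary without any proof, treating it as an immediate consequence of the definitions, and your definition-chase (restricting $\psi$ to $\psi_0$ on $\mathfrak{g}$, expanding $R(\psi_0(a))$ in powers of $\partial$, and using that the $\partial$-invariant $\psi^{-1}$ commutes with the projection onto the $\partial=0$ component) is exactly the intended argument.
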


\section{Rota---Baxter operators of weight 0 on $\Cur(\sl_2(\mathbb{C}))$}

Let us give a general example of RB-operators of weight 0 on the current algebra $\Cur(\mathfrak{g})$ of a~finite-dimensional semisimple Lie algebra~$\mathfrak{g}$. 
We will see further that this construction implies one of two RB-operators on $\Cur(\sl_2(\mathbb{C}))$ which are not $\partial$-linear extensions of RB-operators on $\sl_2(\mathbb{C})$.

\begin{example}
Let $\mathfrak{g}$ be a finite-dimensional semisimple Lie algebra over $\mathbb{C}$.
Let a linear operator $R$ acts on $L = \Cur(\mathfrak{g})$ as follows,
$R(\mathfrak{h})\subset \mathbb{C}[\partial]\mathfrak{h}$ for a Cartan subalgebra $\mathfrak{h}$ of $\mathfrak{g}$, and $R$ maps all other weighted subspaces to zero.
Then $R$ is a Rota---Baxter operator of weight~0 on $L$.
\end{example}

\subsection{Classification}

Let $R$ be an RB-operator of weight~0 on $\Cur(\sl_2(\mathbb{C}))$.
Introduce
\begin{gather}
R(e) = a_e(\partial)e + a_f(\partial)f + a_h(\partial)h, \nonumber \\
R(f) = b_e(\partial)e + b_f(\partial)f + b_h(\partial)h, \label{ROnBasis} \\
R(h) = c_e(\partial)e + c_f(\partial)f + c_h(\partial)h. \nonumber
\end{gather}

Now, we compute by~\eqref{eq:ProductInCur(g)}
\begin{multline*}
[R(h)_\lambda R(h)]
 = [(c_e(\partial)e + c_f(\partial)f + c_h(\partial)h)_\lambda(c_e(\partial)e + c_f(\partial)f + c_h(\partial)h)] \\
 = 2(c_e(\lambda+\partial)c_h(-\lambda)-c_e(-\lambda)c_h(\lambda+\partial))e \allowdisplaybreaks \\
 - 2(c_f(\lambda+\partial)c_h(-\lambda)-c_f(-\lambda)c_h(\lambda+\partial))f \\
 + (c_e(-\lambda)c_f(\lambda+\partial)-c_e(\lambda+\partial)c_f(-\lambda))h;
\end{multline*}

\vspace{-0.75cm}
\begin{multline*}
R([R(h)_\lambda h] + [h_\lambda R(h)])
 {=} R([(c_e(\partial)e + c_f(\partial)f + c_h(\partial)h)_\lambda h] + [h_\lambda (c_e(\partial)e + c_f(\partial)f + c_h(\partial)h)]) \\
 = 2R( (c_e(\lambda+\partial) - c_e(-\lambda))e + (c_f(-\lambda) - c_f(\lambda+\partial))f ) \allowdisplaybreaks \\
 = 2(c_e(\lambda+\partial) - c_e(-\lambda))(a_e(\partial)e + a_f(\partial)f + a_h(\partial)h) \\
 + 2(c_f(-\lambda) - c_f(\lambda+\partial))(b_e(\partial)e + b_f(\partial)f + b_h(\partial)h).
\end{multline*}
Thus, we obtain the following equations,
\begin{multline*}
c_e(\lambda+\partial)c_h(-\lambda) - c_e(-\lambda)c_h(\lambda+\partial) \\
 = a_e(\partial)(c_e(\lambda+\partial)-c_e(-\lambda)) + b_e(\partial)(c_f(-\lambda)-c_f(\lambda+\partial)),
\end{multline*}

\vspace{-1cm}
$$
c_f(-\lambda)c_h(\lambda+\partial) - c_f(\lambda+\partial)c_h(-\lambda)
 = a_f(\partial)(c_e(\lambda+\partial)-c_e(-\lambda)) + b_f(\partial)(c_f(-\lambda)-c_f(\lambda+\partial)),
$$

\vspace{-1cm}
\begin{multline*}
c_e(-\lambda)c_f(\lambda+\partial) - c_e(\lambda+\partial)c_f(-\lambda) \\
 = 2a_h(\partial)(c_e(\lambda+\partial)-c_e(-\lambda)) + 2b_h(\partial)(c_f(-\lambda)-c_f(\lambda+\partial)).
\end{multline*}
Analogously we get from~\eqref{RB} applied for
$[R(e)_\lambda R(e)]$ and $[R(f)_\lambda R(f)]$ the equalities
\begin{multline*}
2a_e(\lambda+\partial)a_h(-\lambda) - 2a_e(-\lambda)a_h(\lambda+\partial) \\
 = c_e(\partial)(a_f(\lambda+\partial)-a_f(-\lambda)) + 2a_e(\partial)(a_h(-\lambda)-a_h(\lambda+\partial)),
\end{multline*}

\vspace{-1cm}
\begin{multline*}
2a_f(-\lambda)a_h(\lambda+\partial) - 2a_f(\lambda+\partial)a_h(-\lambda) \\
 = c_f(\partial)(a_f(\lambda+\partial)-a_f(-\lambda)) + 2a_f(\partial)(a_h(-\lambda)-a_h(\lambda+\partial)),
\end{multline*}

\vspace{-1cm}
$$
a_e(-\lambda)a_f(\lambda+\partial) - a_e(\lambda+\partial)a_f(-\lambda)
 {=} c_h(\partial)(a_f(\lambda+\partial)-a_f(-\lambda)) + 2a_h(\partial)(a_h(-\lambda)-a_h(\lambda+\partial)).
$$

\vspace{-1cm}
\begin{multline*}
2b_e(\lambda+\partial)b_h(-\lambda) - 2b_e(-\lambda)b_h(\lambda+\partial) \\
 = c_e(\partial)(b_e(-\lambda)-b_e(\lambda+\partial)) + 2b_e(\partial)(b_h(\lambda+\partial)-b_h(-\lambda)),
\end{multline*}

\vspace{-1cm}
\begin{multline*}
2b_f(-\lambda)b_h(\lambda+\partial) - 2b_f(\lambda+\partial)b_h(-\lambda) \\
 = c_f(\partial)(b_e(-\lambda)-b_e(\lambda+\partial)) + 2b_f(\partial)(b_h(\lambda+\partial)-b_h(-\lambda)),
\end{multline*}

\vspace{-1cm}
\begin{multline*}
b_e(-\lambda)b_f(\lambda+\partial) - b_e(\lambda+\partial)b_f(-\lambda) \\
 = c_h(\partial)(b_e(-\lambda)-b_e(\lambda+\partial)) + 2b_h(\partial)(b_h(\lambda+\partial)-b_h(-\lambda)).
\end{multline*}
Let us compare
\begin{multline*}
[R(e)_\lambda R(f)]
 = [(a_e(\partial)e + a_f(\partial)f + a_h(\partial)h)_\lambda(b_e(\partial)e + b_f(\partial)f + b_h(\partial)h)] \\
 = 2(a_h(-\lambda)b_e(\lambda+\partial)-a_e(-\lambda)b_h(\lambda+\partial))e \\
 - 2(a_h(-\lambda)b_f(\lambda+\partial)-a_f(-\lambda)b_h(\lambda+\partial))f \\
 + (a_e(-\lambda)b_f(\lambda+\partial)-a_f(-\lambda)b_e(\lambda+\partial))h;
\end{multline*}

\vspace{-1cm}
\begin{multline*}
R([R(e)_\lambda f] + [e_\lambda R(f)])
 {=} R([(a_e(\partial)e + a_f(\partial)f + a_h(\partial)h)_\lambda f] + [e_\lambda (b_e(\partial)e + b_f(\partial)f + b_h(\partial)h)]) \\
 = R( -2b_h(\lambda+\partial)e - 2a_h(-\lambda)f + (a_e(-\lambda)+b_f(\lambda+\partial))h ) \\
 = -2b_h(\lambda+\partial)(a_e(\partial)e + a_f(\partial)f + a_h(\partial)h)
 - 2a_h(-\lambda)(b_e(\partial)e + b_f(\partial)f + b_h(\partial)h) \\
 + (a_e(-\lambda)+b_f(\lambda+\partial))(c_e(\partial)e + c_f(\partial)f + c_h(\partial)h).
\end{multline*}
We get new three identities:
\begin{multline*}
2(a_h(-\lambda)b_e(\lambda+\partial)-a_e(-\lambda)b_h(\lambda+\partial)) \\
 = -2b_h(\lambda+\partial)a_e(\partial) - 2a_h(-\lambda)b_e(\partial)  + (a_e(-\lambda)+b_f(\lambda+\partial))c_e(\partial),
\end{multline*}

\vspace{-1cm}
\begin{multline*}
- 2(a_h(-\lambda)b_f(\lambda+\partial)-a_f(-\lambda)b_h(\lambda+\partial)) \\
 = -2b_h(\lambda+\partial)a_f(\partial) - 2a_h(-\lambda)b_f(\partial)  + (a_e(-\lambda)+b_f(\lambda+\partial))c_f(\partial),
\end{multline*}

\vspace{-1cm}
\begin{multline*}
a_e(-\lambda)b_f(\lambda+\partial)-a_f(-\lambda)b_e(\lambda+\partial) \\
 = -2b_h(\lambda+\partial)a_h(\partial) - 2a_h(-\lambda)b_h(\partial)  + (a_e(-\lambda)+b_f(\lambda+\partial))c_h(\partial).
\end{multline*}
Further, we write down,
\begin{multline*}
[R(e)_\lambda R(h)]
 = [(a_e(\partial)e + a_f(\partial)f + a_h(\partial)h)_\lambda(c_e(\partial)e + c_f(\partial)f + c_h(\partial)h)] \\
 = 2(a_h(-\lambda)c_e(\lambda+\partial)-a_e(-\lambda)c_h(\lambda+\partial))e \\
 - 2(a_h(-\lambda)c_f(\lambda+\partial)-a_f(-\lambda)c_h(\lambda+\partial))f \\
 + (a_e(-\lambda)c_f(\lambda+\partial)-a_f(-\lambda)c_e(\lambda+\partial))h;
\end{multline*}

\vspace{-1cm}
\begin{multline*}
R([R(e)_\lambda h] + [e_\lambda R(h)])
 {=} R([(a_e(\partial)e + a_f(\partial)f + a_h(\partial)h)_\lambda h] + [e_\lambda (c_e(\partial)e + c_f(\partial)f + c_h(\partial)h)]) \\
 = R( -2( a_e(-\lambda) + c_h(\lambda+\partial) )e + 2a_f(-\lambda)f + c_f(\lambda+\partial)h ) \\
 = -2( a_e(-\lambda) + c_h(\lambda+\partial) )(a_e(\partial)e + a_f(\partial)f + a_h(\partial)h) \\
 + 2a_f(-\lambda)(b_e(\partial)e + b_f(\partial)f + b_h(\partial)h)
 + c_f(\lambda+\partial)(c_e(\partial)e + c_f(\partial)f + c_h(\partial)h).
\end{multline*}
We get new three identities:
\begin{multline*}
2(a_h(-\lambda)c_e(\lambda+\partial)-a_e(-\lambda)c_h(\lambda+\partial)) \\
 = -2( a_e(-\lambda) + c_h(\lambda+\partial) )a_e(\partial) + 2a_f(-\lambda)b_e(\partial)  + c_f(\lambda+\partial)c_e(\partial),
\end{multline*}

\vspace{-1cm}
\begin{multline*}
- 2(a_h(-\lambda)c_f(\lambda+\partial)-a_f(-\lambda)c_h(\lambda+\partial)) \\
 = -2( a_e(-\lambda) + c_h(\lambda+\partial) )a_f(\partial) + 2a_f(-\lambda)b_f(\partial)  + c_f(\lambda+\partial)c_f(\partial),
\end{multline*}

\vspace{-1cm}
\begin{multline*}
a_e(-\lambda)c_f(\lambda+\partial)-a_f(-\lambda)c_e(\lambda+\partial) \\
 = -2( a_e(-\lambda) + c_h(\lambda+\partial) )a_h(\partial) + 2a_f(-\lambda)b_h(\partial)  + c_f(\lambda+\partial)c_h(\partial).
\end{multline*}
Finally, we have,
\begin{multline*}
[R(f)_\lambda R(h)]
 = [(b_e(\partial)e + b_f(\partial)f + b_h(\partial)h)_\lambda(c_e(\partial)e + c_f(\partial)f + c_h(\partial)h)] \\
 = 2(b_h(-\lambda)c_e(\lambda+\partial)-b_e(-\lambda)c_h(\lambda+\partial))e \\
 - 2(b_h(-\lambda)c_f(\lambda+\partial)-b_f(-\lambda)c_h(\lambda+\partial))f \\
 + (b_e(-\lambda)c_f(\lambda+\partial)-b_f(-\lambda)c_e(\lambda+\partial))h;
\end{multline*}

\vspace{-1cm}
\begin{multline*}
R([R(f)_\lambda h] + [f_\lambda R(h)])
 {=} R([(b_e(\partial)e + b_f(\partial)f + b_h(\partial)h)_\lambda h] + [f_\lambda (c_e(\partial)e + c_f(\partial)f + c_h(\partial)h)]) \\
 = R( -2b_e(-\lambda)e + 2(b_f(-\lambda)+c_h(\lambda+\partial))f - c_e(\lambda+\partial)h ) \\
 = -2b_e(-\lambda)(a_e(\partial)e + a_f(\partial)f + a_h(\partial)h) \\
 + 2(b_f(-\lambda)+c_h(\lambda+\partial))(b_e(\partial)e + b_f(\partial)f + b_h(\partial)h)
 - c_e(\lambda+\partial)(c_e(\partial)e + c_f(\partial)f + c_h(\partial)h).
\end{multline*}
The last three identities are
\begin{multline*}
2(b_h(-\lambda)c_e(\lambda+\partial)-b_e(-\lambda)c_h(\lambda+\partial)) \\
 = -2b_e(-\lambda)a_e(\partial) + 2(b_f(-\lambda)+c_h(\lambda+\partial))b_e(\partial)  - c_e(\lambda+\partial)c_e(\partial),
\end{multline*}

\vspace{-1cm}
\begin{multline*}
- 2(b_h(-\lambda)c_f(\lambda+\partial)-b_f(-\lambda)c_h(\lambda+\partial)) \\
 = -2b_e(-\lambda)a_f(\partial) + 2(b_f(-\lambda)+c_h(\lambda+\partial))b_f(\partial)  - c_e(\lambda+\partial)c_f(\partial),
\end{multline*}

\vspace{-1cm}
\begin{multline*}
b_e(-\lambda)c_f(\lambda+\partial)-b_f(-\lambda)c_e(\lambda+\partial) \\
 = -2b_e(-\lambda)a_h(\partial) + 2(b_f(-\lambda)+c_h(\lambda+\partial))b_h(\partial)  - c_e(\lambda+\partial)c_h(\partial).
\end{multline*}
Altogether, we have 18 equations:
\begin{multline}\label{1}
c_e(\lambda+\partial)c_h(-\lambda) - c_e(-\lambda)c_h(\lambda+\partial) \\
 = a_e(\partial)(c_e(\lambda+\partial)-c_e(-\lambda)) + b_e(\partial)(c_f(-\lambda)-c_f(\lambda+\partial)),
\end{multline}

\vspace{-0.9cm}
\begin{multline}\label{2}
c_f(-\lambda)c_h(\lambda+\partial) - c_f(\lambda+\partial)c_h(-\lambda) \\
 = a_f(\partial)(c_e(\lambda+\partial)-c_e(-\lambda)) + b_f(\partial)(c_f(-\lambda)-c_f(\lambda+\partial)),
\end{multline}

\vspace{-0.9cm}
\begin{multline}\label{3}
c_e(-\lambda)c_f(\lambda+\partial) - c_e(\lambda+\partial)c_f(-\lambda) \\
 = 2a_h(\partial)(c_e(\lambda+\partial)-c_e(-\lambda)) + 2b_h(\partial)(c_f(-\lambda)-c_f(\lambda+\partial)),
\end{multline}

\vspace{-0.9cm}
\begin{multline}\label{4}
2a_e(\lambda+\partial)a_h(-\lambda) - 2a_e(-\lambda)a_h(\lambda+\partial) \\
 = c_e(\partial)(a_f(\lambda+\partial)-a_f(-\lambda)) + 2a_e(\partial)(a_h(-\lambda)-a_h(\lambda+\partial)),
\end{multline}

\vspace{-0.9cm}
\begin{multline}\label{5}
2a_f(-\lambda)a_h(\lambda+\partial) - 2a_f(\lambda+\partial)a_h(-\lambda) \\
 = c_f(\partial)(a_f(\lambda+\partial)-a_f(-\lambda)) + 2a_f(\partial)(a_h(-\lambda)-a_h(\lambda+\partial)),
\end{multline}

\vspace{-0.9cm}
\begin{multline}\label{6}
a_e(-\lambda)a_f(\lambda+\partial) - a_e(\lambda+\partial)a_f(-\lambda) \\
 = c_h(\partial)(a_f(\lambda+\partial)-a_f(-\lambda)) + 2a_h(\partial)(a_h(-\lambda)-a_h(\lambda+\partial)),
\end{multline}

\vspace{-0.9cm}
\begin{multline}\label{7}
2b_e(\lambda+\partial)b_h(-\lambda) - 2b_e(-\lambda)b_h(\lambda+\partial) \\
 = c_e(\partial)(b_e(-\lambda)-b_e(\lambda+\partial)) + 2b_e(\partial)(b_h(\lambda+\partial)-b_h(-\lambda)),
\end{multline}

\vspace{-0.9cm}
\begin{multline}\label{8}
2b_f(-\lambda)b_h(\lambda+\partial) - 2b_f(\lambda+\partial)b_h(-\lambda) \\
 = c_f(\partial)(b_e(-\lambda)-b_e(\lambda+\partial)) + 2b_f(\partial)(b_h(\lambda+\partial)-b_h(-\lambda)),
\end{multline}

\vspace{-0.9cm}
\begin{multline}\label{9}
b_e(-\lambda)b_f(\lambda+\partial) - b_e(\lambda+\partial)b_f(-\lambda) \\
 = c_h(\partial)(b_e(-\lambda)-b_e(\lambda+\partial)) + 2b_h(\partial)(b_h(\lambda+\partial)-b_h(-\lambda)),
\end{multline}

\vspace{-0.9cm}
\begin{multline}\label{10}
2(a_h(-\lambda)b_e(\lambda+\partial)-a_e(-\lambda)b_h(\lambda+\partial)) \\
 = -2b_h(\lambda+\partial)a_e(\partial) - 2a_h(-\lambda)b_e(\partial) + (a_e(-\lambda)+b_f(\lambda+\partial))c_e(\partial),
\end{multline}

\vspace{-0.9cm}
\begin{multline}\label{11}
- 2(a_h(-\lambda)b_f(\lambda+\partial)-a_f(-\lambda)b_h(\lambda+\partial)) \\
 = -2b_h(\lambda+\partial)a_f(\partial) - 2a_h(-\lambda)b_f(\partial) + (a_e(-\lambda)+b_f(\lambda+\partial))c_f(\partial),
\end{multline}

\vspace{-0.9cm}
\begin{multline}\label{12}
a_e(-\lambda)b_f(\lambda+\partial)-a_f(-\lambda)b_e(\lambda+\partial) \\
 = -2b_h(\lambda+\partial)a_h(\partial) - 2a_h(-\lambda)b_h(\partial) + (a_e(-\lambda)+b_f(\lambda+\partial))c_h(\partial),
\end{multline}

\vspace{-0.9cm}
\begin{multline}\label{13}
2(a_h(-\lambda)c_e(\lambda+\partial)-a_e(-\lambda)c_h(\lambda+\partial)) \\
 = -2( a_e(-\lambda) + c_h(\lambda+\partial) )a_e(\partial) + 2a_f(-\lambda)b_e(\partial) + c_f(\lambda+\partial)c_e(\partial),
\end{multline}

\vspace{-0.9cm}
\begin{multline}\label{14}
- 2(a_h(-\lambda)c_f(\lambda+\partial)-a_f(-\lambda)c_h(\lambda+\partial)) \\
 = -2( a_e(-\lambda) + c_h(\lambda+\partial) )a_f(\partial) + 2a_f(-\lambda)b_f(\partial) + c_f(\lambda+\partial)c_f(\partial),
\end{multline}

\vspace{-0.9cm}
\begin{multline}\label{15}
a_e(-\lambda)c_f(\lambda+\partial)-a_f(-\lambda)c_e(\lambda+\partial) \\
 = -2( a_e(-\lambda) + c_h(\lambda+\partial) )a_h(\partial) + 2a_f(-\lambda)b_h(\partial) + c_f(\lambda+\partial)c_h(\partial),
\end{multline}

\vspace{-0.9cm}
\begin{multline}\label{16}
2(b_h(-\lambda)c_e(\lambda+\partial)-b_e(-\lambda)c_h(\lambda+\partial)) \\
 = -2b_e(-\lambda)a_e(\partial) + 2(b_f(-\lambda)+c_h(\lambda+\partial))b_e(\partial) - c_e(\lambda+\partial)c_e(\partial),
\end{multline}

\vspace{-0.9cm}
\begin{multline}\label{17}
- 2(b_h(-\lambda)c_f(\lambda+\partial)-b_f(-\lambda)c_h(\lambda+\partial)) \\
 = -2b_e(-\lambda)a_f(\partial) + 2(b_f(-\lambda)+c_h(\lambda+\partial))b_f(\partial) - c_e(\lambda+\partial)c_f(\partial),
\end{multline}

\vspace{-0.9cm}
\begin{multline}\label{18}
b_e(-\lambda)c_f(\lambda+\partial)-b_f(-\lambda)c_e(\lambda+\partial) \\
 = -2b_e(-\lambda)a_h(\partial) + 2(b_f(-\lambda)+c_h(\lambda+\partial))b_h(\partial) - c_e(\lambda+\partial)c_h(\partial).
\end{multline}

We can extend any automorphism of $\sl_2(\mathbb{C})$
to an automorphism of $L = \Cur(\sl_2(\mathbb{C}))$.
Hence, by Proposition~\ref{prop:RBOnsl2-0} and by Corollary~\ref{coro} we may assume that
\begin{equation}\label{R0-sl2-zero}
a_e(0) = a_f(0) = a_h(0) = 0, \quad
b_f(0) = 0, \quad c_f(0) = 0.
\end{equation}
Let us derive consequences of these identities.
Consider~\eqref{10},~\eqref{4},~\eqref{8},~\eqref{6},~\eqref{9},~\eqref{14},
and~\eqref{16} respectively with $\lambda+\partial = 0$:
\begin{gather}
2a_h(\partial)(b_e(\partial)+b_e(0)) = c_e(\partial)a_e(\partial), \label{ah-ce1}  \\
2a_h(\partial)a_e(\partial) = c_e(\partial)a_f(\partial), \label{ah-ce2} \\
2b_h(\partial)b_f(\partial) = c_f(\partial)(b_e(\partial)-b_e(0)), \label{2bhbf=cfbe} \\
2a_h^2(\partial) = a_f(\partial)c_h(\partial), \label{ah^2} \allowdisplaybreaks \\
2b_h(\partial)(b_h(\partial)-b_h(0))
 = b_e(\partial)c_h(\partial) + b_e(0)(b_f(\partial) - c_h(\partial)), \label{bh^2} \\
0 = a_f(\partial)(a_e(\partial) - b_f(\partial) + 2c_h(0)), \label{cf-af} \\
c_e(0)(c_e(\partial) + 2b_h(\partial))
 = 2b_e(\partial)(b_f(\partial) - a_e(\partial) + 2c_h(0)). \label{ce-be}
\end{gather}
Take~$\lambda = 0$ in~\eqref{5},~\eqref{7}, and~\eqref{11} to get
\begin{gather}
0 = a_f(\partial)(c_f(\partial) - 2a_h(\partial)), \label{af-af} \\
b_e(0)(c_e(\partial) + 2b_h(\partial))
 = b_e(\partial)(c_e(\partial) - 2b_h(\partial) + 4b_h(0)), \label{be-be} \\
2b_h(\partial)a_f(\partial) = c_f(\partial)b_f(\partial). \label{2bhaf=cfbf}
\end{gather}
Substituting $\lambda + \partial = 0$ in~\eqref{13} and~\eqref{17}, we obtain
\begin{gather}
a_e^2(\partial) - a_f(\partial)b_e(\partial) + a_h(\partial)c_e(0) = 0, \label{ae^2-af*be} \\
2b_f^2(\partial) - 2a_f(\partial)b_e(\partial) - c_e(0)c_f(\partial) = 0. \label{bf^2-af*be}
\end{gather}

{\sc Case I}: $a_f(\partial) = 0$.
By~\eqref{ah^2}, we get $a_h(\partial) = 0$.
By~\eqref{ae^2-af*be}, we have $a_e(\partial) = 0$.
By~\eqref{2bhaf=cfbf} and~\eqref{bf^2-af*be}, we obtain
$b_f(\partial) = 0$.
By~\eqref{14} considered at $\lambda = 0$, we have $c_f(\partial) = 0$.
We have the following system on $b_e(\partial),b_h(\partial),c_e(\partial),c_h(\partial)$
remaining from the equations~\eqref{1},~\eqref{7},~\eqref{9},~\eqref{16},~\eqref{18}:
\begin{equation}\label{af=0-1new}
c_e(\lambda+\partial)c_h(-\lambda) - c_e(-\lambda)c_h(\lambda+\partial) = 0,
\end{equation}

\vspace{-1.1cm}
\begin{multline}\label{af=0-7new}
2b_e(\lambda+\partial)b_h(-\lambda) - 2b_e(-\lambda)b_h(\lambda+\partial) \\
 = c_e(\partial)(  b_e(-\lambda) - b_e(\lambda+\partial) ) + 2b_e(\partial)(b_h(\lambda+\partial)-b_h(-\lambda)),
\end{multline}

\vspace{-1cm}
\begin{gather}
c_h(\partial)(b_e(-\lambda) - b_e(\lambda+\partial)) + 2b_h(\partial)(b_h(\lambda+\partial) - b_h(-\lambda)) = 0, \label{af=0-9new} \\
2(b_h(-\lambda)c_e(\lambda+\partial) - b_e(-\lambda)c_h(\lambda+\partial))
 = 2c_h(\lambda+\partial)b_e(\partial) - c_e(\lambda+\partial)c_e(\partial), \label{af=0-16new} \\
2c_h(\lambda+\partial) b_h(\partial) - c_e(\lambda+\partial)c_h(\partial) = 0. \label{af=0-18new}
\end{gather}

{\sc Case IA}: $c_h(\partial) = 0$.
Then $b_h(\partial) = b_h(0)\in \mathbb{C}$ by~\eqref{af=0-9new}.
By~\eqref{af=0-16new}, we have $c_e(\partial) = c_e(0)\in \mathbb{C}$
and, moreover, $c_e(0) = 0$ or $c_e(0) = - 2b_h(0)$.
Excluding the case when $R = R_0$, we get
$$
R_1(e) = 0, \quad
R_1(f) = b_e(\partial)e + \alpha h,\quad
R_1(h) = -2\alpha e
$$
for some $\alpha\in \mathbb{C}$ and nonconstant $b_e(\partial)$.

{\sc Case IB}: $c_h(\partial) \neq 0$.
By~\eqref{af=0-1new}, we have
$$
\frac{c_e(-\lambda)}{c_h(-\lambda)}
 = \frac{c_e(\lambda+\partial)}{c_h(\lambda+\partial)},
$$
it means that $c_e(\partial) = \alpha c_h(\partial)$ for some $\alpha\in \mathbb{C}$.
Thus, by~\eqref{af=0-18new}, we get
$b_h(\partial) = (\alpha/2)c_h(\partial)$.
By~\eqref{af=0-16new},
$b_e(\partial) = (\alpha^2/2)c_h(\partial)$.
Therefore, we get the RB-operator:
$$
R_2(e) = 0, \quad
R_2(f) = (\alpha/2)q(\partial)( \alpha e+h ), \quad
R_2(h) = q(\partial)( \alpha e+h ).
$$

{\sc Case II}: $a_f(\partial)\neq0$ and $b_e(\partial) = 0$.
By~\eqref{ah-ce1}, we have $a_e(\partial)c_e(\partial) = 0$.
Thus, by~\eqref{ah-ce2}, $a_f(\partial)c_e(\partial) = 0$.
Since $a_f(\partial)\neq0$, we conclude that $c_e(\partial) = 0$.

By~\eqref{ae^2-af*be} and~\eqref{bf^2-af*be}, we get
$a_e(\partial) = b_f(\partial) = 0$.
The equality~\eqref{2bhaf=cfbf} implies that $b_h(\partial) = 0$.
Now we may conjugate $R$ with an automorphism
$\varphi$ such that $\varphi(e) = f$, $\varphi(f) = e$, and $\varphi(h) = -h$.
So, we move to the case I.

{\sc Case III}: $a_f(\partial) \neq 0$ and $b_e(\partial) \neq 0$.
By~\eqref{cf-af} and by~\eqref{af-af}, we have
\begin{gather}
c_f(\partial) = 2a_h(\partial), \label{cf=2ah} \\
a_e(\partial) = b_f(\partial) - 2c_h(0). \label{ae=bf}
\end{gather}
The last equality implies $c_h(0) = (b_f(0) - a_e(0))/2 = 0$.
So, $a_e(\partial) = b_f(\partial)$.

From~\eqref{2bhaf=cfbf}, \eqref{cf=2ah}, \eqref{ah-ce2}
and the relation $a_e(\partial) = b_f(\partial)$, we conclude
$$
2b_h(\partial)a_f(\partial)
 = c_f(\partial)b_f(\partial)
 = 2a_h(\partial)b_f(\partial)
 = 2a_h(\partial)a_e(\partial)
 = c_e(\partial)a_f(\partial),
$$
i.\,e.,
\begin{equation}\label{ce=2bh}
c_e(\partial) = 2b_h(\partial).
\end{equation}
The equality~\eqref{ce-be} considered at $\partial = 0$ implies $c_e(0) = b_h(0) = 0$.

Therefore, by~\eqref{ae^2-af*be}, we have
$a_e^2(\partial) = b_f^2(\partial) = a_f(\partial)b_e(\partial)\neq0$.
Subtracting~\eqref{2bhbf=cfbe} from~\eqref{ah-ce1} we obtain, by~\eqref{ce=2bh},
$$
a_h(\partial)b_e(0) = 0.
$$

{\sc Case IIIa}: $a_h(\partial) = 0$. 
By~\eqref{cf=2ah} and~\eqref{2bhaf=cfbf}, $c_f(\partial) = b_h(\partial) = 0$.
Then by~\eqref{ah-ce2}, we derive $c_e(\partial) = 0$.
By~\eqref{6} considered at $\lambda = 0$, we get $c_h(\partial)a_f(\partial) = 0$, so,
$c_h(\partial) = 0$. 
By~\eqref{12} computed at $\lambda+\partial = 0$, we have $b_e(0)a_f(\partial) = 0$,
so, $b_e(0) = 0$. 
Since $a_e(\partial)\neq0$, we apply~\eqref{6}
to get the equality $a_f(\partial) = \alpha a_e(\partial)$ with some nonzero $\alpha\in\mathbb{C}$.
Thus, we obtain the RB-operator~$R$ such that
$$
R_3(e) = q(\partial)(e+\alpha f),\quad
R_3(f) = q(\partial)((1/\alpha)e+f),\quad
R_3(h) = 0.
$$

{\sc Case IIIb}: $a_h(\partial) \neq 0$. 
Then $b_e(0) = 0$ and $b_h(\partial)\neq0$ by~\eqref{ce=2bh}.
By~\eqref{ah-ce1} and~\eqref{ah-ce2}, we get
$$
\frac{b_h(\partial)}{a_h(\partial)}
 = \frac{b_f(\partial)}{a_f(\partial)}
 = \frac{b_e(\partial)}{a_e(\partial)} = \varphi(\partial),
$$
where $\varphi(\partial)$ is a~rational function on $\partial$.

By~\eqref{ce=2bh},~\eqref{2bhaf=cfbf}, and~\eqref{12} with $\lambda = 0$, we get
$$
\frac{2b_h(\partial)}{b_f(\partial)}
 = \frac{c_h(\partial)}{a_h(\partial)}
 = \frac{c_f(\partial)}{a_f(\partial)}
 = \frac{c_e(\partial)}{a_e(\partial)}
 = 2\psi(\partial),
$$
for another rational function~$\psi(\partial)$.
By $c_e(\partial) = 2b_h(\partial)$, $c_f(\partial) = 2a_h(\partial)$,
and $b_f(\partial) = a_e(\partial)$,
we get the following form of the matrix $[R]$ of $R$ in the basis $e,f,h$:
$$
[R] = a_f(\partial)\begin{pmatrix}
\varphi(\partial) & 1 & \psi(\partial) \\
\varphi^2(\partial) & \varphi(\partial) & \varphi(\partial)\psi(\partial) \\
2\varphi(\partial)\psi(\partial) & 2\psi(\partial) & 2\psi^2(\partial)
\end{pmatrix}.
$$
Denote $a_f(\partial) = q(\partial)$.
The right hand-side of~\eqref{2} multiplied by $\varphi(\partial)$
and $2\psi(\partial)$ coincides
with the right hand-side of~\eqref{1} and \eqref{3} respectively.
Thus, we get the equalities
\begin{multline*}
q(-\lambda)q(\lambda+\partial)\psi(\lambda+\partial)\psi(-\lambda)
 ( \varphi(\lambda+\partial)\psi(-\lambda)
 - \varphi(-\lambda)\psi(\lambda+\partial) ) \\
  = q(-\lambda)q(\lambda+\partial)\psi(\lambda+\partial)\psi(-\lambda)
 \varphi(\partial)( \psi(\lambda+\partial) - \psi(-\lambda) ),
\end{multline*}
\begin{multline*}
q(-\lambda)q(\lambda+\partial)\psi(\lambda+\partial)\psi(-\lambda)
 ( \varphi(-\lambda) - \varphi(\lambda+\partial) ) \\
 = 2q(-\lambda)q(\lambda+\partial)\psi(\lambda+\partial)\psi(-\lambda)
 \psi(\partial)( \psi(\lambda+\partial) - \psi(-\lambda) ).
\end{multline*}
Simplifying both relations, we obtain
\begin{gather*}
\varphi(\lambda+\partial)\psi(-\lambda)
 - \varphi(-\lambda)\psi(\lambda+\partial)
  = \varphi(\partial)( \psi(\lambda+\partial) - \psi(-\lambda) ), \\
( \varphi(-\lambda) - \varphi(\lambda+\partial) )
  = 2\psi(\partial)( \psi(\lambda+\partial) - \psi(-\lambda) ).
\end{gather*}
Thus, we get
$$
\varphi(-\lambda)(\psi(-\lambda) - \psi(\lambda+\partial))
 = (2\psi(\partial)\psi(-\lambda)+\varphi(\partial))( \psi(\lambda+\partial) - \psi(-\lambda) ).
$$
Suppose that $\psi(\partial)$ is not a constant. Then
$2\psi(\partial)\psi(-\lambda)+\varphi(\partial) = - \varphi(-\lambda)$.
Setting $-\lambda = \partial$, we get $\psi^2(\partial) = -\varphi(\partial)$.
So, $\psi(\partial)(2\psi(0)-\psi(\partial)) = -\varphi(0)$. 
It means that both rational functions $\psi(\partial)$ and $\varphi(\partial)$ are constant, a~contradiction.

Thus $\psi(\partial) = \beta\in \mathbb{C}\setminus\{0\}$, and it is easy to show that 
$\varphi(\partial) = \alpha\in \mathbb{C}\setminus\{0\}$.
Summarizing, we get
$$
R_4(e) = q(\partial)(\alpha e + f + \beta h),\quad
R_4(\alpha e-f) = R_4(2\beta e - h) = 0,\quad \alpha,\beta\neq0.
$$

\begin{theorem} 
Up to conjugation with an automorphism of $\Cur(\sl_2(\mathbb{C}))$ and up to a scalar multiple,
we have that a~Rota---Baxter operator $R$ of weight 0 on $\Cur(\sl_2(\mathbb{C}))$
is either a~$\partial$-linear extension of an RB-operator of weight 0 on $\sl_2(\mathbb{C})$ or one of the following for some nonzero $q(\partial)\in\mathbb{C}[\partial]$:

(R1) $R(e) = 0$, $R(f) = q(\partial)e + \alpha h$, $R(h) = - 2\alpha e$, $\alpha\in \mathbb{C}$;

(R2) $R(e) = R(f) = 0$, $R(h) = q(\partial)h$.
\end{theorem}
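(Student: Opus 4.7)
The plan is as follows. Parametrize any $\partial$-linear operator $R$ on $L = \Cur(\sl_2(\mathbb{C}))$ via~\eqref{ROnBasis} by nine polynomial coefficients $a_e, a_f, a_h, b_e, b_f, b_h, c_e, c_f, c_h \in \mathbb{C}[\partial]$. Expanding the Rota--Baxter identity~\eqref{RB} on each of the six ordered pairs $(a,b) \in \{e,f,h\}^2$ with $a \le b$ and matching coefficients of $e, f, h$ yields a system of eighteen polynomial identities in the two variables $\lambda, \partial$; these are~\eqref{1}--\eqref{18}.

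Next, exploit symmetry to normalize $R_0$. By Corollary~\ref{coro}, conjugation of $R$ with the $\partial$-linear extension of any $\psi_0 \in \Aut(\sl_2(\mathbb{C}))$ conjugates $R_0$ by $\psi_0$. Proposition~\ref{prop:RBOnsl2-0} classifies the possible $R_0$ up to this action and up to scalar, and every case a)--e) satisfies the common normalization $R_0(e) = 0$ together with $b_f(0) = c_f(0) = 0$, which is exactly~\eqref{R0-sl2-zero}. We may therefore impose~\eqref{R0-sl2-zero} from the outset.

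Now extract single-variable information: specializing~\eqref{1}--\eqref{18} at $\lambda = 0$ and at $\lambda + \partial = 0$ and substituting~\eqref{R0-sl2-zero} produces compact identities in $\partial$ alone (the analogues of~\eqref{ah-ce1}--\eqref{bf^2-af*be}). Branch on whether $a_f(\partial)$ and $b_e(\partial)$ vanish identically. In the subcase $a_f \equiv 0$, equation~\eqref{ah^2} forces $a_h \equiv 0$, after which~\eqref{ae^2-af*be} and~\eqref{2bhaf=cfbf} kill $a_e$, $b_f$, $c_f$; a further split on whether $c_h \equiv 0$ gives either family~(R1) with appropriate parameters or the full family~(R2). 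The subcase $a_f \not\equiv 0$, $b_e \equiv 0$ is handled symmetrically via the involution $e \leftrightarrow f$, $h \mapsto -h$. The generic subcase $a_f, b_e \not\equiv 0$ first yields the identifications $c_f = 2 a_h$, $a_e = b_f$, $c_e = 2 b_h$, and one then splits on whether $a_h \equiv 0$.

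The main obstacle is the last branch $a_h \not\equiv 0$, where the nine entries collapse to a common polynomial multiple $q(\partial)$ of a rank-one matrix built from two rational functions $\varphi(\partial), \psi(\partial)$ and the remaining constraints become the functional equations
\begin{align*}
\varphi(\lambda+\partial)\psi(-\lambda) - \varphi(-\lambda)\psi(\lambda+\partial) &= \varphi(\partial)\bigl(\psi(\lambda+\partial) - \psi(-\lambda)\bigr), \\
\varphi(-\lambda) - \varphi(\lambda+\partial) &= 2\psi(\partial)\bigl(\psi(\lambda+\partial) - \psi(-\lambda)\bigr).
\end{align*}
Eliminating $\varphi(\lambda+\partial)$ between the two lines and specializing $-\lambda = \partial$ should reduce a hypothetical nonconstant $\psi$ to the relation $\psi^2(\partial) = -\varphi(\partial)$ together with $\psi(\partial)(2\psi(0) - \psi(\partial)) = -\varphi(0)$, forcing both $\varphi$ and $\psi$ to be constants; the resulting solution is conjugate to family~(R1), and after merging with the sporadic solutions produced in Case~III(a), which reduce to $\partial$-linear extensions of Proposition~\ref{prop:RBOnsl2-0} by a rescaling automorphism, the classification is complete.
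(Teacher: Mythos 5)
Your outline reproduces the paper's own proof almost step for step: the same eighteen identities \eqref{1}--\eqref{18}, the same normalization \eqref{R0-sl2-zero} via Proposition~\ref{prop:RBOnsl2-0} and Corollary~\ref{coro}, the same trichotomy on the vanishing of $a_f$ and $b_e$, and the same functional-equation argument ($\psi^2(\partial)=-\varphi(\partial)$, then $\psi(\partial)(2\psi(0)-\psi(\partial))=-\varphi(0)$) to force $\varphi$ and $\psi$ constant in the last branch. Up to that point the plan is sound (modulo small omissions, e.g.\ in Case~I one also needs \eqref{bf^2-af*be} and \eqref{14} at $\lambda=0$ to finish off $b_f$ and $c_f$).

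The genuine problem is in your final assembly, where both reductions you assert are false. First, the Case~III(a) family $R(e)=q(\partial)(e+\alpha f)$, $R(f)=q(\partial)((1/\alpha)e+f)$, $R(h)=0$ is \emph{not} a $\partial$-linear extension of an operator on $\sl_2(\mathbb{C})$ when $q$ is nonconstant, and no rescaling automorphism can make it one: its image is the rank-one module generated by $e+\alpha f$, which is a \emph{semisimple} element of $\sl_2(\mathbb{C})$, so the correct reduction is to (R2) via the automorphism carrying $e+\alpha f$ into (a multiple of) $h$ --- a rotation of the Cartan, not a rescaling. Second, the Case~III(b) family with image generated by $v=\alpha e+f+\beta h$ is conjugate to (R1) only in the degenerate case $\alpha+\beta^2=0$, i.e.\ exactly when $v$ is nilpotent; for $\alpha+\beta^2\neq 0$ the element $v$ is semisimple and the operator is again conjugate to (R2). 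The invariant you are missing is precisely whether the generator of the rank-one image is nilpotent or semisimple, and this dichotomy is what separates (R1) with $\alpha=0$ from (R2). Since both normal forms appear in the theorem's list, your endpoint happens to be correct, but the two reduction claims as written would not survive verification and must be replaced by explicit conjugating automorphisms (as in the paper's $\xi$, $\pi$, $\theta$) together with the nilpotent/semisimple case split.
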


\begin{proof}
Above, we have obtained the RB-operators $R_1$, $R_2$, $R_3$, and $R_4$.
The RB-operator $R_1$ coincides with (R1). 

For $R_2$, define an automorphism $\psi$ of $\sl_2(\mathbb{C})$ as follows,
$$
\psi(e) = e,\quad 
\psi(f) = f - (\alpha/2)h - (\alpha^2/4)e,\quad
\psi(h) = \alpha e+h.
$$
Then $\psi^{-1}R_2\psi$ is exactly~(R2). 

For $R_3$, $\xi^{-1}R_3\xi$ gives again~(R2). Here 
$$
\xi(e) = \frac{\sqrt{\alpha}h-e+\alpha f}{2\sqrt{\alpha}},\quad 
\xi(f) = \frac{\sqrt{\alpha}h+e-\alpha f}{2\sqrt{\alpha}},\quad 
\xi(h) = \frac{e+\alpha f}{\sqrt{\alpha}}.
$$

Finally, consider the RB-operator $R_4$.
If $\alpha+\beta^2=0$, then $\pi^{-1}R_4\pi$ equals to a particular case of~(R1),
where 
$$
\pi(e) = -\beta^2 e + f + \beta h,\quad \pi(f) = e,\quad \pi(h) = 2\beta e -h.
$$
Otherwise, $\theta^{-1}R_4\theta$ is exactly~(R2). Here
\begin{gather*}
\theta(e) 
 = \frac{\alpha e-f+(\beta+D)(2\beta e-h)}{2iD},\quad 
\theta(f) 
 = \frac{\alpha e-f+(\beta-D)(2\beta e-h)}{2iD},\\
\theta(h) = \frac{\alpha e + f + \beta h}{D}, \quad
D = \sqrt{\alpha+\beta^2}. 
 \qedhere
\end{gather*}
\end{proof}

\subsection{Connection with conformal CYBE}

In~\cite{Bai}, it was shown that every solution to the conformal classical Yang---Baxter equation on 
a conformal Lie algebra~$L$ endowed with a non-degenerate symmetric invariant conformal bilinear
form gives rise to a Rota---Baxter
operator on $L$.

\begin{proposition}[{\cite[Corollary 3.3]{Bai}}]
Let $L$ be a finite Lie conformal algebra which is free as a $C[\partial]$-module.
Suppose that there exists a non-degenerate symmetric invariant conformal bilinear
form on~$L$, and $r \in L \otimes L$ is skew-symmetric. Then $r$ is a solution to the
CCYBE if and only if $P^r_0$ is a Rota---Baxter operator of weight~0 on~$L$ for 
$P^r \in \mathrm{Cend}(R)$ defined by the formula
\begin{equation} \label{CCYBE2RB}
\langle r,u\otimes v\rangle_{(\lambda,\mu)}
 = \langle P^r_{\lambda-\partial}(u),v\rangle_\mu.
\end{equation}
\end{proposition}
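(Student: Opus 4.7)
The plan is to use the non-degenerate invariant form $\langle\cdot,\cdot\rangle_\lambda$ as a dictionary between tensor identities in $L^{\otimes 2}, L^{\otimes 3}$ and operator identities on $L$. Writing $r = \sum a_i \otimes b_i$, the defining equation $\langle r, u \otimes v\rangle_{(\lambda, \mu)} = \langle P^r_{\lambda - \partial}(u), v\rangle_\mu$ unpacks to the explicit formula
$$P^r_0(u) = \sum_i \langle a_i, u\rangle_{-\partial}\, b_i,$$
where $\partial$ is understood to act on the $b_i$. In particular $P := P^r_0$ is automatically $\partial$-linear, and the skew-symmetry of $r$ corresponds to a natural compatibility of $P$ with the form; this guarantees that the candidate object on both sides of the biconditional is well-defined.

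The core step is to expand the candidate Rota---Baxter identity
$$[P(u)_\lambda P(v)] = P([P(u)_\lambda v] + [u_\lambda P(v)])$$
and pair both sides with an arbitrary $w \in L$ through $\langle \cdot, w\rangle_\mu$. Each occurrence of $P$ is stripped off using the explicit formula above, and each bracket is shifted across the pairing using the invariance property of the form (an identity of the shape $\langle [a_\lambda b], c\rangle_\mu = \langle b, [a_{-\lambda-\partial} c]\rangle_{\lambda+\mu}$, up to sign and convention). After these substitutions, the three terms of the RB identity all become triple contractions of $r \otimes r$ against $u \otimes v \otimes w$ with specific arrangements of the formal variables. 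One then verifies that these contractions match, term by term, the three summands of $\llbracket r, r\rrbracket$ paired against $u \otimes v \otimes w$, where the skew-symmetry of $r$ is used to reorder the $a_i \otimes b_i$ factors in the appropriate terms.

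The condition "$\equiv 0 \pmod{\partial^{\otimes 3}}$" in CCYBE corresponds exactly to the fact that the RB identity is required only for $P^r_0$ (the $\lambda = 0$ slice of the family $P^r_\lambda$): since the pairing with $w$ absorbs one of the three derivations $\partial_{\otimes 1}, \partial_{\otimes 2}, \partial_{\otimes 3}$, the mod-$\partial^{\otimes 3}$ freedom in CCYBE amounts precisely to the freedom of adding terms vanishing under this one-sided evaluation of $P^r$. Non-degeneracy of the form in the last step converts the paired identity (holding for all $w \in L$) back into the operator identity itself, giving both directions of the equivalence.

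The main obstacle is the careful bookkeeping of the $\lambda$-variables and the derivations $\partial_{\otimes i}$: one has to track exactly which variable in $\llbracket r, r\rrbracket$ plays the role of the RB parameter $\lambda$, which is absorbed by pairing with $w$ at parameter $\mu$, and which sits inside the $-\partial$ appearing in the formula for $P^r_0$. A secondary subtlety is verifying that the skew-symmetry of $r$ is applied in the right places so that the three terms of $\llbracket r, r\rrbracket$ line up correctly with the three terms of the RB identity, rather than with shifted or sign-twisted variants.
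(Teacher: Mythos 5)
First, note that the paper does not prove this statement at all: it is quoted verbatim as Corollary~3.3 of Hong and Bai's paper \cite{Bai}, so there is no internal proof to compare yours against. In the source, the result is obtained by first showing that $r$ solves the CCYBE if and only if the associated map $L^{*c}\to L$ is an $\mathcal{O}$-operator for the coadjoint representation, and then using the non-degenerate invariant form to identify $L^{*c}$ with $L$, turning that $\mathcal{O}$-operator into a Rota---Baxter operator. Your plan --- dualize everything through the form and match the three terms of $\llbracket r,r\rrbracket$ against the three terms of the Rota---Baxter identity --- is the same duality argument carried out in one step rather than two, and it is the right idea.

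The problem is that what you have written is a plan, not a proof. The entire content of the statement lives in the two steps you defer: ``one then verifies that these contractions match, term by term,'' and the claim that the reduction modulo $\partial^{\otimes 3}$ in the CCYBE ``amounts precisely to'' restricting to the slice $P^r_0$ of the conformal endomorphism $P^r_\lambda$. Neither is routine. The first requires tracking, for each of the three summands of $\llbracket r,r\rrbracket$, which of $\partial_{\otimes 1},\partial_{\otimes 2},\partial_{\otimes 3}$ is substituted for the bracket parameter and how the sesquilinearity rule $\langle\partial a,b\rangle_\lambda=-\lambda\langle a,b\rangle_\lambda$ redistributes the derivations; your own explicit formula already has a sign ambiguity here (with the paper's conventions one gets $\langle\partial^k i,u\rangle_\lambda=(-\lambda)^k\langle i,u\rangle_\lambda$ and hence $(-\partial)^k$ after setting $\lambda=\partial$, not the $\partial^k$ that $\langle a_i,u\rangle_{-\partial}$ would produce). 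The second claim --- that the quotient by $\partial^{\otimes 3}$ corresponds exactly to evaluating $P^r$ at $0$ --- is precisely the delicate point that makes the conformal statement different from its classical analogue, and asserting it does not establish it. To make this a proof you would need to write out the three paired expressions explicitly in the formal variables, exhibit the bijection with the three summands of \eqref{CCYBE}, and show that the kernel of the pairing against arbitrary $w$ at parameter $\mu$ is exactly the ideal generated by $\partial^{\otimes 3}$; as it stands, the argument stops exactly where the work begins.
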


Let us clarify which Rota---Baxter operators on $L = \Cur(\sl_2(\mathbb{C}))$ we get from the solutions to
the weak conformal classical Yang---Baxter equation on~$L$.

\begin{theorem}[\cite{Cursl2}] \label{Thm:Cursl2}
Let $L = \Cur(\sl_2(\mathbb{C}))$,
$r = \sum A_{ql}(\partial_{\otimes 1}, 
\partial_{\otimes 2}) q\otimes l\in L\otimes L$, 
$q, l \in \{e, f, h\}$, be a skew-symmetric solution to CCYBE. 
Then $A_{qq}(0,0) = 0$ for any $q$ and 
$$
A_{fe}(0,0) = - A_{ef}(0,0) = \beta, \quad
A_{he}(0,0) = -A_{eh}(0,0) = \alpha, \quad
A_{hf}(0,0) = -A_{fh}(0,0) = \gamma
$$
for some $\alpha,\beta,\gamma\in\mathbb{C}$.
Moreover, $A_{ql}(x,-x) - A_{ql}(0,0) = a_{ql} x f(x^2)$ for a unitary polynomial $f(x)$ and $a_{q,l} \in \mathbb{C}$, and 
up to action of automorphisms of~$L$, we have three cases:

(i) $a_{ee} = 1$, $a_{ql} = 0$ for $(q,l)\neq(e,e)$, and $\beta = \gamma = 0$;

(ii) $a_{hh} = \lambda\in\mathbb{C}\setminus\{0\}$, $a_{ql} = 0$ for $(q,l)\neq(h,h)$, and $\alpha =  \beta = \gamma = 0$; 

(iii) $a_{ql} = 0$ for all $q,l$.
\end{theorem}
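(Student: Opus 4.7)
The plan is to classify skew-symmetric solutions of CCYBE by: (1) translating skew-symmetry into polynomial identities on the coefficients $A_{ql}$; (2) expanding \eqref{CCYBE} in the basis and specializing to the anti-diagonal $\partial_{\otimes 1}+\partial_{\otimes 2} = 0$ to isolate the common factor $f(x^2)$; and (3) using $\Aut(\sl_2(\mathbb{C}))$ to reduce to the three listed cases.

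First, the skew-symmetry condition $r + \tau(r) = 0$ is equivalent to
$$
A_{ql}(x,y) + A_{lq}(y,x) = 0, \qquad q,l \in \{e,f,h\}.
$$
Setting $q = l$ gives $A_{qq}(x,y) = -A_{qq}(y,x)$, hence $A_{qq}(0,0) = 0$; evaluating the mixed identities at $(0,0)$ yields $A_{fe}(0,0) = -A_{ef}(0,0) = \beta$, and similarly for $\alpha, \gamma$. Substituting $y = -x$ further shows that $C_{ql}(x) := A_{ql}(x,-x) - A_{ql}(0,0)$ satisfies $C_{ql}(x) = -C_{lq}(-x)$, so each $C_{qq}$ is automatically odd, while the oddness of the off-diagonal $C_{ql}$ will have to be forced by the CCYBE itself.

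Next, I would expand $\llbracket r, r \rrbracket$ using $[f(\partial)a{}_\lambda g(\partial)b] = f(-\lambda)g(\lambda+\partial)[a,b]$ and collect the coefficient of each basis tensor $q_1 \otimes q_2 \otimes q_3 \in \sl_2(\mathbb{C})^{\otimes 3}$. Working modulo $\partial^{\otimes 3}$, I substitute $\partial_{\otimes 3} = -\partial_{\otimes 1} - \partial_{\otimes 2}$ to obtain polynomial identities in two variables $(x,y)$. The main obstacle will be extracting the structural form $C_{ql}(x) = a_{ql}\,x f(x^2)$ with a polynomial $f(x)$ \emph{common} to all pairs $(q,l)$. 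My strategy is to specialize these identities to $y = -x$, where each becomes a one-variable relation among the $C_{ql}(x)$. A greatest-common-divisor analysis of the specialized system, combined with the parity $C_{ql}(x) = -C_{lq}(-x)$, should collapse all $C_{ql}$ into simultaneous scalar multiples of a single odd polynomial $x f(x^2)$; choosing $f$ monic fixes the scalars $a_{ql}$, and the parity identity yields $a_{ql} = a_{lq}$.

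Finally, to separate cases (i)--(iii), I would return to the CCYBE identities \emph{off} the anti-diagonal, which impose further quadratic constraints on the now symmetric $3\times 3$ matrix $(a_{ql})$. I expect these to force $(a_{ql})$ to have rank at most one, i.e.\ $a_{ql} = c_q c_l$ for some vector $(c_e, c_f, c_h) \in \sl_2(\mathbb{C})$. Since $\Aut(\sl_2(\mathbb{C}))$ has only two non-trivial orbits on non-zero elements --- nilpotent and semisimple --- up to conjugation $c$ is proportional either to $e$ (giving case (i), after absorbing the remaining scalar via a diagonal automorphism $e \mapsto \mu e$, $f \mapsto \mu^{-1} f$) or to $h$ (giving case (ii), where the residual scalar $\lambda$ cannot be absorbed by any automorphism fixing $h$); vanishing $c$ yields case (iii).
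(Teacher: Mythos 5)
First, a point of reference: the paper does not prove Theorem~\ref{Thm:Cursl2} at all --- it is imported verbatim from the companion paper \cite{Cursl2} (arXiv:2209.12431), so there is no in-paper argument to compare yours against. Judged on its own, your opening step is correct and complete: skew-symmetry is indeed equivalent to $A_{ql}(x,y)+A_{lq}(y,x)=0$, which gives $A_{qq}(0,0)=0$, the antisymmetry of the constants $\alpha,\beta,\gamma$, and the parity relation $C_{ql}(x)=-C_{lq}(-x)$ exactly as you state.

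Beyond that, however, the two claims that carry all the content of the theorem are asserted rather than proved. (1) The existence of a \emph{single} monic polynomial $f$ with $C_{ql}(x)=a_{ql}\,x f(x^2)$ for \emph{all} nine pairs $(q,l)$ simultaneously --- including the oddness of the off-diagonal $C_{ql}$ --- is exactly the hard part; ``a greatest-common-divisor analysis \dots should collapse'' names a hope, not an argument, and you never exhibit even one of the specialized one-variable identities from which such an analysis would start. (2) ``I expect these to force $(a_{ql})$ to have rank at most one'' is likewise unsubstantiated; without writing down the off-antidiagonal constraints you cannot rule out, say, a diagonalizable rank-two coefficient matrix. Finally, your proposal is silent on the coupling between the polynomial part and the constant part: in case (i) the theorem forces $\beta=\gamma=0$ while leaving $\alpha$ free, and in case (ii) it forces $\alpha=\beta=\gamma=0$; these restrictions can only come from cross-terms in $\llbracket r,r\rrbracket$ mixing $A_{ql}(0,0)$ with $a_{ql}xf(x^2)$, which your outline never examines. (A minor inaccuracy: $\Aut(\sl_2(\mathbb{C}))$ has infinitely many orbits on nonzero elements --- one nilpotent orbit plus a one-parameter family of semisimple ones --- though your actual use, that every element is conjugate to a multiple of $e$ or of $h$, is fine.) As it stands the proposal is a plausible plan with the routine parts done and the essential parts missing.
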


Recall that given a~semisimple finite-dimensional Lie algebra $\mathfrak{g}$ over $\mathbb{C}$,
we define the required form on $\Cur(\mathfrak{g})$ as follows.
Firstly, $\langle a,b\rangle_\lambda:=\langle a,b\rangle$, i.\,e. the Killing form on $\mathfrak{g}$. Secondly, extend it by the rule
$$
\langle \partial a,b\rangle 
 = -\lambda \langle a,b\rangle
 = - \langle a, \partial b\rangle,\quad a,b\in L. 
$$
Such form is invariant, it means that 
$\langle [a_\mu b],c\rangle_\lambda 
 = \langle a,[b_{\lambda-\partial}c]\rangle_\mu$ for all $a,b,c\in \mathfrak{g}$.
We additionally set for $a\otimes b,c\otimes d\in L\otimes L$,
$$
\langle a\otimes b,c\otimes d\rangle_{(\lambda,\mu)}
 = \langle a,c\rangle_\lambda 
   \langle b,d\rangle_\mu. 
$$

Let 
$r = \sum\limits_{i,j=1}^3\sum\limits_{k,l\geq0}\gamma_{kl}^{ij} \partial^k i\otimes \partial^l j$ be a skew-symmetric solution to CCYBE.
Take $u,v\in\{e,f,h\}$, then 
$$
\langle r,u\otimes v\rangle_{(\lambda,\mu)}
 = \sum\limits_{i,j=1}^3\sum\limits_{k,l\geq0}\gamma_{kl}^{ij} 
 \langle \partial^k i, u\rangle_\lambda
 \langle \partial^l j, v\rangle_\mu 
 = \sum\limits_{i,j=1}^3\sum\limits_{k,l\geq0}\gamma_{kl}^{ij}(-\lambda)^k
 \langle i, u\rangle_\lambda \langle \partial^l j, v\rangle_\mu.
$$
Since $v$ is arbitrary and the Killing form is non-degenerate on~$\sl_2(\mathbb{C})$, we have by~\eqref{CCYBE2RB}
$$
P^r_{\lambda-\partial}(u)
 = \sum\limits_{i,j=1}^3\sum\limits_{k,l\geq0}\gamma_{kl}^{ij}(-\lambda)^k
 \langle i, u\rangle_\lambda \partial^l j.
$$
For $\lambda = \partial$, we get in terms of the polynomials $A_{ij}(\partial_{\otimes1},\partial_{\otimes2})$,
$$
P^r_0(u)
 = \sum\limits_{i,j=1}^3\sum\limits_{k,l\geq0}\gamma_{kl}^{ij}(-\partial)^k\partial^l \langle i, u\rangle_\lambda j
 = \sum\limits_{i,j=1}^3 A_{ij}(-\partial,\partial)\langle i, u\rangle j.
$$ 
By Theorem~\ref{Thm:Cursl2}, we get either an RB-operator which is a $\partial$-linear extension of an RB-ope\-rator of weight~0 on $\sl_2(\mathbb{C})$ or
both RB-operators (R1) and~(R2) with odd polynomial~$q(\partial)$.

\section{Rota---Baxter operators of weight~1 on $\Cur(\sl_2(\mathbb{C}))$}

Let $R$ be an RB-operator of weight~1 on $\Cur(\sl_2(\mathbb{C}))$.
We again use the formulas~\eqref{ROnBasis}
and repeat the computations similar to the ones from the case of weight~0.
Thus, we obtain the~system:
\begin{multline}\label{1'}
c_e(\lambda+\partial)c_h(-\lambda) - c_e(-\lambda)c_h(\lambda+\partial) \\
 = a_e(\partial)(c_e(\lambda+\partial)-c_e(-\lambda)) + b_e(\partial)(c_f(-\lambda)-c_f(\lambda+\partial)),
\end{multline}

\vspace{-0.9cm}
\begin{multline}\label{2'}
c_f(-\lambda)c_h(\lambda+\partial) - c_f(\lambda+\partial)c_h(-\lambda) \\
 = a_f(\partial)(c_e(\lambda+\partial)-c_e(-\lambda)) + b_f(\partial)(c_f(-\lambda)-c_f(\lambda+\partial)),
\end{multline}

\vspace{-0.9cm}
\begin{multline}\label{3'}
c_e(-\lambda)c_f(\lambda+\partial) - c_e(\lambda+\partial)c_f(-\lambda) \\
 = 2a_h(\partial)(c_e(\lambda+\partial)-c_e(-\lambda)) + 2b_h(\partial)(c_f(-\lambda)-c_f(\lambda+\partial)),
\end{multline}

\vspace{-0.9cm}
\begin{multline}\label{4'}
2a_e(\lambda+\partial)a_h(-\lambda) - 2a_e(-\lambda)a_h(\lambda+\partial) \\
 = c_e(\partial)(a_f(\lambda+\partial)-a_f(-\lambda)) + 2a_e(\partial)(a_h(-\lambda)-a_h(\lambda+\partial)),
\end{multline}

\vspace{-0.9cm}
\begin{multline}\label{5'}
2a_f(-\lambda)a_h(\lambda+\partial) - 2a_f(\lambda+\partial)a_h(-\lambda) \\
 = c_f(\partial)(a_f(\lambda+\partial)-a_f(-\lambda)) + 2a_f(\partial)(a_h(-\lambda)-a_h(\lambda+\partial)),
\end{multline}

\vspace{-0.9cm}
\begin{multline}\label{6'}
a_e(-\lambda)a_f(\lambda+\partial) - a_e(\lambda+\partial)a_f(-\lambda) \\
 = c_h(\partial)(a_f(\lambda+\partial)-a_f(-\lambda)) + 2a_h(\partial)(a_h(-\lambda)-a_h(\lambda+\partial)),
\end{multline}

\vspace{-0.9cm}
\begin{multline}\label{7'}
2b_e(\lambda+\partial)b_h(-\lambda) - 2b_e(-\lambda)b_h(\lambda+\partial) \\
 = c_e(\partial)(b_e(-\lambda)-b_e(\lambda+\partial)) + 2b_e(\partial)(b_h(\lambda+\partial)-b_h(-\lambda)),
\end{multline}

\vspace{-0.9cm}
\begin{multline}\label{8'}
2b_f(-\lambda)b_h(\lambda+\partial) - 2b_f(\lambda+\partial)b_h(-\lambda) \\
 = c_f(\partial)(b_e(-\lambda)-b_e(\lambda+\partial)) + 2b_f(\partial)(b_h(\lambda+\partial)-b_h(-\lambda)),
\end{multline}

\vspace{-0.9cm}
\begin{multline}\label{9'}
b_e(-\lambda)b_f(\lambda+\partial) - b_e(\lambda+\partial)b_f(-\lambda) \\
 = c_h(\partial)(b_e(-\lambda)-b_e(\lambda+\partial)) + 2b_h(\partial)(b_h(\lambda+\partial)-b_h(-\lambda)),
\end{multline}

\vspace{-0.9cm}
\begin{multline}\label{10'}
2(a_h(-\lambda)b_e(\lambda+\partial)-a_e(-\lambda)b_h(\lambda+\partial)) \\
 = -2b_h(\lambda+\partial)a_e(\partial) - 2a_h(-\lambda)b_e(\partial) + (a_e(-\lambda)+b_f(\lambda+\partial) +1)c_e(\partial),
\end{multline}

\vspace{-0.9cm}
\begin{multline}\label{11'}
- 2(a_h(-\lambda)b_f(\lambda+\partial)-a_f(-\lambda)b_h(\lambda+\partial)) \\
 = -2b_h(\lambda+\partial)a_f(\partial) - 2a_h(-\lambda)b_f(\partial) + (a_e(-\lambda)+b_f(\lambda+\partial) +1)c_f(\partial),
\end{multline}

\vspace{-0.9cm}
\begin{multline}\label{12'}
a_e(-\lambda)b_f(\lambda+\partial)-a_f(-\lambda)b_e(\lambda+\partial) \\
 = -2b_h(\lambda+\partial)a_h(\partial) - 2a_h(-\lambda)b_h(\partial) + (a_e(-\lambda)+b_f(\lambda+\partial) +1)c_h(\partial),
\end{multline}

\vspace{-0.9cm}
\begin{multline}\label{13'}
2(a_h(-\lambda)c_e(\lambda+\partial)-a_e(-\lambda)c_h(\lambda+\partial)) \\
 = -2( a_e(-\lambda) + c_h(\lambda+\partial) + 1)a_e(\partial) + 2a_f(-\lambda)b_e(\partial) + c_f(\lambda+\partial)c_e(\partial),
\end{multline}

\vspace{-0.9cm}
\begin{multline}\label{14'}
- 2(a_h(-\lambda)c_f(\lambda+\partial)-a_f(-\lambda)c_h(\lambda+\partial)) \\
 = -2( a_e(-\lambda) + c_h(\lambda+\partial) + 1)a_f(\partial) + 2a_f(-\lambda)b_f(\partial) + c_f(\lambda+\partial)c_f(\partial),
\end{multline}

\vspace{-0.9cm}
\begin{multline}\label{15'}
a_e(-\lambda)c_f(\lambda+\partial)-a_f(-\lambda)c_e(\lambda+\partial) \\
 = -2( a_e(-\lambda) + c_h(\lambda+\partial) + 1)a_h(\partial) + 2a_f(-\lambda)b_h(\partial) + c_f(\lambda+\partial)c_h(\partial),
\end{multline}

\vspace{-0.9cm}
\begin{multline}\label{16'}
2(b_h(-\lambda)c_e(\lambda+\partial)-b_e(-\lambda)c_h(\lambda+\partial)) \\
 = -2b_e(-\lambda)a_e(\partial) + 2(b_f(-\lambda)+c_h(\lambda+\partial) + 1)b_e(\partial) - c_e(\lambda+\partial)c_e(\partial),
\end{multline}

\vspace{-0.9cm}
\begin{multline}\label{17'}
- 2(b_h(-\lambda)c_f(\lambda+\partial)-b_f(-\lambda)c_h(\lambda+\partial)) \\
 = -2b_e(-\lambda)a_f(\partial) + 2(b_f(-\lambda)+c_h(\lambda+\partial) + 1)b_f(\partial) - c_e(\lambda+\partial)c_f(\partial),
\end{multline}

\vspace{-0.9cm}
\begin{multline}\label{18'}
b_e(-\lambda)c_f(\lambda+\partial)-b_f(-\lambda)c_e(\lambda+\partial) \\
 = -2b_e(-\lambda)a_h(\partial) + 2(b_f(-\lambda)+c_h(\lambda+\partial) + 1)b_h(\partial) - c_e(\lambda+\partial)c_h(\partial).
\end{multline}

We can extend an automorphism of $\sl_2(\mathbb{C})$
to an automorphism of $L = \Cur(\sl_2(\mathbb{C}))$.
By Proposition~\ref{prop:RBOnsl2-1} and by Corollary~\ref{coro} we may assume that
\begin{equation}\label{R0-sl2-nonzero}
a_f(0) = 0, \quad
b_e(0) = b_f(0) = b_h(0) = 0, \quad c_e(0) = c_f(0) = 0.
\end{equation}

\begin{theorem} \label{Thm:NonzeroCase}
Up to conjugation with an automorphism of $\Cur(\sl_2(\mathbb{C}))$ we have that a~nontrivial Rota---Baxter operator $R$ of weight~1 on $\Cur(\sl_2(\mathbb{C}))$ is either 
a~$\partial$-linear extension of an RB-operator of weight~1 on $\sl_2(\mathbb{C})$ or the following one for some nonzero $q(\partial)\in\mathbb{C}[\partial]$:

(Q1) $R(e) = -e$, $R(f) = 0$, $R(h) = q(\partial)h$.
\end{theorem}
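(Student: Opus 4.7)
The plan is to mimic the weight-zero argument, working with the 18 identities \eqref{1'}-\eqref{18'} in place of \eqref{1}-\eqref{18}, and exploiting the normalization supplied by Proposition~\ref{prop:RBOnsl2-1} together with Corollary~\ref{coro}. After conjugating by a $\partial$-linear extension of a suitable automorphism of $\sl_2(\mathbb{C})$, I may assume the constant-term conditions \eqref{R0-sl2-nonzero} hold, with $R_0$ equal to one of the three nontrivial operators (a), (b), (c) of Proposition~\ref{prop:RBOnsl2-1} (or to $R_0 = -\id$, which is one of the trivial cases excluded from the statement). The scalar $c_h(0)$ is then fixed: it is $0$ in cases (a), (b) and equals $t \neq 0$ in case (c).

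Next I would specialize \eqref{1'}-\eqref{18'} at $\lambda = 0$ and $\lambda + \partial = 0$, paralleling the derivation of \eqref{ah-ce1}-\eqref{bf^2-af*be} in the weight-zero case, but now with the extra ``$+1$'' appearing in \eqref{10'}-\eqref{18'} producing additive shifts in the resulting identities. I would then split into the two branches of the weight-zero proof: $a_f(\partial) \equiv 0$ and $a_f(\partial) \not\equiv 0$. In the first branch, the analogues of \eqref{ah^2}, \eqref{ae^2-af*be}, \eqref{2bhaf=cfbf}, \eqref{bf^2-af*be} successively force $a_h$, $a_e+1$, $b_f$, and $c_f$ to be constants determined by their values at zero; the remaining equations on $b_e, b_h, c_e, c_h$ together with the normalization then give either a $\partial$-linear extension of $R_0$, or the new family (Q1) in which $c_h(\partial) = q(\partial)$ is an arbitrary nonzero polynomial (attached to $R_0$ of type (a) when $q(0) = 0$ and to $R_0$ of type (c) otherwise).

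For the branch $a_f(\partial) \not\equiv 0$, I would mimic Case IIIb of the weight-zero proof by expressing the polynomial entries of $[R]$ via rational functions $\varphi(\partial), \psi(\partial)$ proportional to $a_f(\partial)$. Comparing \eqref{1'}, \eqref{2'}, \eqref{3'} then yields functional equations for $\varphi$ and $\psi$; the additive shifts coming from \eqref{10'}-\eqref{18'} obstruct nonconstant solutions, so $\varphi, \psi$ must reduce to scalars compatible with \eqref{R0-sl2-nonzero}, forcing $R$ to be a $\partial$-linear extension of an RB-operator of weight 1 on $\sl_2(\mathbb{C})$. Lemma~\ref{lem:phi} applied via $\phi(R) = -(R + \id)$ may be invoked to interchange $\ker R$ and $\ker(R + \id)$ whenever a sub-case is easier after this swap, and an automorphism analogous to the one used in Case II of the weight-zero proof reduces the sub-branch $b_e(\partial) \equiv 0$ to the first branch.

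The main obstacle will be the bookkeeping of the ``$+1$'' contributions through these specializations: whereas in weight 0 they vanish and yield clean multiplicative identities, here they introduce constants that must be carried consistently across the three sub-cases of $R_0$. The payoff is the additional rigidity they impose: while weight 0 admits two nontrivial polynomial families (R1) and (R2), the additive shifts obstruct the weight-1 analogue of (R1), leaving (Q1) as the unique new family beyond $\partial$-linear extensions.
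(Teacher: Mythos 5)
Your overall strategy (specialize the eighteen identities at $\lambda=0$ and $\lambda+\partial=0$, normalize $R_0$ via Proposition~\ref{prop:RBOnsl2-1} and Corollary~\ref{coro}, then case-split) matches the paper's, but two points in your plan are genuine gaps rather than omitted routine detail. First, you restrict $R_0$ to the three nontrivial operators of Proposition~\ref{prop:RBOnsl2-1} and dismiss the trivial ones as ``excluded from the statement''. That conflates triviality of $R$ with triviality of $R_0$: a priori a nonzero $R$ on $\Cur(\sl_2(\mathbb{C}))$ could have $R_0=0$, and that case is fully compatible with the normalization \eqref{R0-sl2-nonzero} (which leaves $a_e(0)$, $a_h(0)$, $c_h(0)$ unconstrained). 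It must be shown to force $R=0$ identically --- the paper does this in its Case~IA --- and your plan silently drops it.

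Second, and this is the heart of the theorem: the branch $a_f\equiv0$ is exactly where the weight-zero proof produced the nonconstant families $R_1$ and $R_2$ supported on $b_e,b_h,c_e,c_h$, and your plan disposes of their weight-one analogues with the sentence that ``the additive shifts obstruct nonconstant solutions''. That is the conclusion, not an argument. The actual mechanism is a specific short computation: specializing \eqref{7'} and \eqref{9'} at $\lambda+\partial=0$ and \eqref{16'} at $\lambda=0$ gives $b_e(c_e-2b_h)=0$, $b_ec_h=2b_h^2$ and $2b_e(c_h+1)=c_e^2$, whence (if $b_e\not\equiv 0$) $c_e^2=c_e^2+2b_e$, forcing $b_e\equiv0$. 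The paper makes this its \emph{primary} case split ($b_e\equiv0$ versus $b_e\not\equiv0$), killing the second case in three lines before $a_f$ is ever discussed; with your split on $a_f$ you would still have to produce this computation inside both branches. Relatedly, your claim that the chain of specialized identities forces $c_f$ to be constant fails in the subcase $a_e\equiv-1$: the relevant identity $(a_e(\partial)+1)c_f(\partial)=2a_h(\partial)b_f(\partial)$ becomes vacuous there, and a separate argument (the paper's Case~IBB, using \eqref{2'}, \eqref{14''}, \eqref{15''} and $a_f(0)=0$) is needed to exclude nonconstant $c_f$. Until these computations are supplied, the plan does not establish that (Q1) is the only new family.
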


\begin{proof}
By~\eqref{6'},~\eqref{7'},~\eqref{9'},~\eqref{11'},~\eqref{12'}, and~\eqref{17'} considered at $\lambda+\partial=0$, we have
\begin{gather}
a_f(\partial)(c_h(\partial)-a_e(0) )
 = 2a_h(\partial)(a_h(\partial)-a_h(0)), \label{afch=2ah^2} \\
b_e(\partial)(c_e(\partial)-2b_h(\partial)) = 0, \label{be(ce-2bh)} \\
b_e(\partial)c_h(\partial) = 2b_h^2(\partial), \label{bech-2bh^2} \\
(a_e(\partial)+1)c_f(\partial)
 = 2a_h(\partial)b_f(\partial), \label{(ae+1)cf=2ahbf} \\
(a_e(\partial)+1)c_h(\partial)
 = 2a_h(\partial)b_h(\partial), \label{(ae+1)ch=2ahbh} \\
a_f(\partial)b_e(\partial)
 = b_f(\partial)(b_f(\partial)+1). \label{afbe-bf2}
\end{gather}
Also, by~\eqref{14'}--\eqref{16'} with $\lambda = 0$, we get
\begin{gather}
2(a_e(0)+1+c_h(\partial))a_f(\partial)
 = c_f(\partial)(c_f(\partial)+2a_h(0)), \label{14''} \\
2(a_e(0)+1+c_h(\partial))a_h(\partial)
 = c_f(\partial)(c_h(\partial)-a_e(0)), \label{15''} \\
2b_e(\partial)(c_h(\partial)+1) = c_e^2(\partial) \label{2be(ch+1)=ce2}.
\end{gather}

{\sc Case I}: $b_e(\partial) = 0$.
By~\eqref{bech-2bh^2},~\eqref{afbe-bf2}, and~\eqref{2be(ch+1)=ce2}, we have $b_h(\partial) = b_f(\partial) = c_e(\partial) = 0$. By~\eqref{13'}, we conclude that
$a_e(\partial)(a_e(\partial)+1) = 0$.

{\sc Case IA}: $a_e(\partial) = 0$.
Then the formulas~\eqref{(ae+1)cf=2ahbf} and~\eqref{(ae+1)ch=2ahbh} imply $c_f(\partial) = c_h(\partial) = 0$. Further, by~\eqref{14''} and~\eqref{15''}, $a_f(\partial) = a_h(\partial) = 0$, i.\,e., $R = 0$.

{\sc Case IB}: $a_e(\partial) = -1$.

{\sc Case IBA}: $c_f(\partial) = 0$.
The equalities~\eqref{14''} and~\eqref{15''} give $a_f(\partial)c_h(\partial) = a_h(\partial)c_h(\partial) = 0$. If $c_h(\partial)\neq0$, then
$a_f(\partial) = a_h(\partial) = 0$ and it is~(Q1).

Suppose that $c_h(\partial) = 0$.
Consider~\eqref{5'} and~\eqref{6'} with $\lambda+\partial=0$:
$$
a_f(\partial)(a_h(\partial)-2a_h(0)) = 0, \quad
a_f(\partial) = 2a_h(\partial)(a_h(\partial)-a_h(0)).
$$
They imply that $a_h(\partial)$ and $a_f(\partial)$ are constant.
Thus, $R$ is defined by Proposition~\ref{prop:RBOnsl2-1}.

{\sc Case IBB}: $c_f(\partial) \neq 0$.
From the equality~\eqref{2'}, it follows that
$c_h(\partial) = \alpha c_f(\partial)$ for some $\alpha\in\mathbb{C}$.
By~\eqref{15''}, we write down
$$
2a_h(\partial)c_h(\partial)
 = 2\alpha a_h(\partial)c_f(\partial)
 = c_f(\partial)(1+\alpha c_f(\partial)).
$$
If $\alpha = 0$, then $c_f(\partial) = 0$, a contradiction.
Hence, $a_h(\partial) = \frac{\alpha c_f(\partial)+1}{2\alpha}$
and $a_h(0) = 1/(2\alpha)$.
By~\eqref{14''}, we derive that
$$
a_f(\partial)
 = \frac{1}{2\alpha}\left(c_f(\partial) + \frac{2}{2\alpha}\right)
 = \frac{\alpha c_f(\partial)+1}{2\alpha^2}.
$$
Therefore, $0 = a_f(0) = 1/(2\alpha^2)$, a contradiction.

{\sc Case II}: $b_e(\partial)\neq 0$.
Then by~\eqref{be(ce-2bh)}, we deduce that
$c_e(\partial) = 2b_h(\partial)$.
Due to~\eqref{bech-2bh^2}, we get
$c_h(\partial) = c_e^2(\partial)/(2b_e(\partial))$.
Finally, we apply this equality to~\eqref{2be(ch+1)=ce2},
$$
c_e^2(\partial)
 = 2b_e(\partial)(c_h(\partial)+1)
 = 2b_e(\partial)\left(\frac{c_e^2(\partial)}{2b_e(\partial)}+1\right)
 = c_e^2(\partial) + 2b_e(\partial),
$$
a contradiction.
\end{proof}

We may generalize the RB-operator (Q1) from Theorem~\ref{Thm:NonzeroCase} for 
$L = \Cur(\mathfrak{g})$, where $\mathfrak{g}$ is a finite-dimensional semisimple Lie algebra.

\begin{example}
Let $\mathfrak{g}$ be a finite-dimensional semisimple Lie algebra over $\mathbb{C}$
with a~root system~$\Phi$.
Let a linear operator $R$ acts on $L = \Cur(\mathfrak{g})$ as follows,
$R(\mathfrak{h})\subset \mathbb{C}[\partial]\mathfrak{h}$ for the Cartan subalgebra $\mathfrak{h}$ of $\mathfrak{g}$, $R(e_\lambda) = -e_\lambda$, when $\lambda\in\Phi_+$, and $R(e_\lambda) = 0$ for all $\lambda\in\Phi_-$.
Then $R$ is a Rota---Baxter operator of weight~1 on $L$.
\end{example}

\section*{Acknowledgements}

The authors are supported by the grant of the President of the Russian
Federation for young scientists (MK-1241.2021.1.1).

\medskip
\noindent Vsevolod Gubarev \\
Roman Kozlov \\
Sobolev Institute of Mathematics \\
Acad. Koptyug ave. 4, 630090 Novosibirsk, Russia \\
Novosibirsk State University \\
Pirogova str. 2, 630090 Novosibirsk, Russia \\
e-mail: wsewolod89@gmail.com, dyadkaromka94@gmail.com

\end{document}